\newtheorem{theorem}{Theorem}[section]
\newtheorem{corollary}[theorem]{Corollary}
\newtheorem{lemma}[theorem]{Lemma}
\newtheorem{proposition}[theorem]{Proposition}
\newtheorem{conjecture}[theorem]{Conjecture}
\theoremstyle{definition}
\newtheorem{definition}[theorem]{Definition}
\newtheorem{remark}[theorem]{Remark}
\theoremstyle{remark}
\renewcommand{\theclaim}{\textup{\theclaim}}
\numberwithin{equation}{section}
\def\openone
\newbox\ipbox
\newcommand{\ip}[2]{\left\langle #1\, , \,#2\right\rangle}
\newcommand{\diracb}[1]{\left\langle #1\mathrel{\mathchoice

{\setbox\ipbox=\hbox{$\displaystyle \left\langle\mathstrut
#1\right.$}

\vrule height\ht\ipbox width0.25pt depth\dp\ipbox}

{\setbox\ipbox=\hbox{$\textstyle \left\langle\mathstrut
#1\right.$}

\vrule height\ht\ipbox width0.25pt depth\dp\ipbox}

{\setbox\ipbox=\hbox{$\scriptstyle \left\langle\mathstrut
#1\right.$}

\vrule height\ht\ipbox width0.25pt depth\dp\ipbox}

{\setbox\ipbox=\hbox{$\scriptscriptstyle \left\langle\mathstrut
#1\right.$}

\vrule height\ht\ipbox width0.25pt depth\dp\ipbox}

}\right. }
\newcommand{\dirack}[1]{\left. \mathrel{\mathchoice

{\setbox\ipbox=\hbox{$\displaystyle \left.\mathstrut
#1\right\rangle$}

\vrule height\ht\ipbox width0.25pt depth\dp\ipbox}

{\setbox\ipbox=\hbox{$\textstyle \left.\mathstrut
#1\right\rangle$}

\vrule height\ht\ipbox width0.25pt depth\dp\ipbox}

{\setbox\ipbox=\hbox{$\scriptstyle \left.\mathstrut
#1\right\rangle$}

\vrule height\ht\ipbox width0.25pt depth\dp\ipbox}

{\setbox\ipbox=\hbox{$\scriptscriptstyle \left.\mathstrut
#1\right\rangle$}

\vrule height\ht\ipbox width0.25pt depth\dp\ipbox}

} #1\right\rangle}
\newcommand{\beq}{\begin{equation}}
\newcommand{\eeq}{\end{equation}}
\newcommand{\cj}[1]{\overline{#1}}
\newcommand{\bz}{\mathbb{Z}}
\newcommand{\br}{\mathbb{R}}
\newcommand{\bc}{\mathbb{C}}
\newcommand{\bn}{\mathbb{N}}
\def\blfootnote{\xdef\@thefnmark{}\@footnotetext}
\def\-{^{-1}}
\def\D{\mathscr{D}}
\def\ty{\emptyset}
\begin{document}

\title[The momentum operator on a union of intervals]{The momentum operator on a union of intervals and the Fuglede conjecture}

\author{Dorin Ervin Dutkay$^*$}
\address{[Dorin Ervin Dutkay] University of Central Florida\\
	Department of Mathematics\\
	4000 Central Florida Blvd.\\
	P.O. Box 161364\\
	Orlando, FL 32816-1364\\
U.S.A.\\} \email{Dorin.Dutkay@ucf.edu}
\thanks{$^*$Corresponding author.}

\author{Palle E.T. Jorgensen}
\address{[Palle E.T. Jorgensen]University of Iowa\\
Department of Mathematics\\
14 MacLean Hall\\
Iowa City, IA 52242-1419\\}\email{palle-jorgensen@uiowa.edu}

\subjclass[2010]{47E05,42A16}
\keywords{momentum operator, self-adjoint extension, Fourier bases, Fuglede conjecture }

\begin{abstract}
The purpose of the present paper is to place a number of geometric (and hands-on) configurations relating to spectrum and geometry inside a general framework for the {\it Fuglede conjecture}. Note that in its general form, the Fuglede conjecture concerns general Borel sets $\Omega$ in a fixed number of dimensions $d$ such that  $\Omega$ has finite positive Lebesgue measure. The conjecture proposes a correspondence between two properties for $\Omega$, one takes the form of spectrum, while the other refers to a translation-tiling property. We focus here on the case of dimension one, and the connections between the Fuglede conjecture and properties of the self-adjoint extensions of the momentum operator $\frac{1}{2\pi i}\frac{d}{dx}$, realized in $L^2$ of  a union of intervals.

\end{abstract}
\maketitle
\tableofcontents
\newcommand{\Ds}{\mathsf{D}}
\newcommand{\Dmax}{\mathscr D_{\operatorname*{max}}}
\newcommand{\Dmin}{\Ds_{\operatorname*{min}}}

\section{Introduction}

 For bounded open domains $\Omega$ in $\br^d$, the Fuglede problem deals with two properties that $\Omega$ may or may not have, one (called spectral) is relative to the Hilbert space $L^2(\Omega)$, the question of whether $L^2(\Omega)$  has an orthogonal $d$-variable Fourier basis, and the other is geometric (tiling), whether $\Omega$ tiles $\br^d$ by some set of translation vectors. The original problem asked whether the two properties are equivalent.

In this paper we show how tools from operator theory (especially choices of spectral representations for unbounded operators), serve to link the two sides of the problem, spectrum vs geometry.

    Since the inception, stated this way, the Fuglede conjecture is now known to be negative, more precisely that the two properties are not equivalent in dimension 3 and higher \cite{Tao04,MR2159781,MR2237932,MR2264214,MR2267631}. Nonetheless, the Fuglede problem is even open for $d=1$.

Parallel to this we note that there are many closely related new research directions, including analysis on fractals, which deal with various notions of interplay between spectral theoretic properties on the one hand, and geometry on the other, e.g., direct problems and inverse problems. We further stress that the original formulation was stated in terms of properties for the set of $d$ partial derivative operators for the coordinate directions in $\Omega$, specifically the possible extensions of partial derivative operators in the form of commuting generators for unitary one-parameter groups acting in $L^2(\Omega)$. Such extensions are known to necessarily be local translation generators. Moreover, following quantum theory, such generators may be viewed as momentum operators, a viewpoint motivated by the canonical duality from quantum mechanics for momentum and position observables. This formulation in turn makes a direct connection to scattering theoretic properties, again related to the Hilbert space $L^2(\Omega)$. And in this form, the problem is of interest even for $d=1$; and so the case when $\Omega$ is a union of intervals.

   Continuing earlier work (e.g., \cite{DuJo15}) we aim here at presenting new results for the $d=1$ Fuglede problem, and making the presentation as self-contained as possible, for the readers who might not be experts in this field. 
	
	 Starting with its classical roots, the Fuglede problem addresses two related properties for bounded domains $\Omega$ in $\br^d$ . More precisely, Fuglede's question asks for a specific linking between multivariable spectra on one side, and geometry of $\Omega$  on the other (spectral vs tiling). But by now, the Fuglede problem/conjecture has become distinctly interdisciplinary. It has come to encompass a diverse variety of neighboring fields of mathematics each of which in turn lies at the cross-roads of at least the following six separate disciplines: (i) harmonic analysis, (ii) spectral and scattering theory for operators in Hilbert space, (iii) metric/convex geometry, (iv) fractals, (v) operator algebras, and (vi) representation theory. For the benefit of readers, we include citations to the following list of papers, each dealing with one or the other of the above six areas, \cite{MR3838440,MR4458124,MR4448683,MR4446811,MR4305973,MR4177387,MR4093913,MR4039387,MR3678495,MR3649367,MR2576285}.

	The paper is organized as follows: in Section 1, we introduce some definitions related to unbounded symmetric operators and their extensions, the associated one-parameter unitary group, and we recall Fuglede's result and conjecture, which serve as the main motivation for our paper. In the next sections we focus on the case when the set $\Omega$ is a finite union of intervals in dimension $d=1$. In Section 2, we study the symmetric and the self-adjoint extensions $A$ of the momentum operator $\Ds=\frac{1}{2\pi i}\frac{d}{dx}$ on the space $C_c^\infty(\Omega)$ of infinitely differentiable functions with compact support in $\Omega$. In Section 3, we describe the spectral decomposition of such self-adjoint extensions. Section 4 is devoted to the one-parameter unitary group $U(t)=\exp(2\pi itA)$, $t\in\br$, which acts as translations inside the intervals of $\Omega$ and has a different behavior at the end-points. In Section 5, we make the connections between the existence of orthogonal Fourier bases on $\Omega$ and the properties of the self-adjoint extensions $A$ or of the unitary group $U(t)$.

\section{Notations and preliminaries}

When $d = 1$, with a choice of an open set $\Omega$,  the corresponding connected components will then be intervals, and so $\Omega$ takes the form of a finite union of intervals as follows.

\begin{definition}\label{defi2.1}

Let 
$$\Omega=\bigcup_{i=1}^n(\alpha_i,\beta_i),\mbox{ where }-\infty<\alpha_1<\beta_1<\alpha_2<\beta_2<\dots<\alpha_n<\beta_n<\infty.$$
So 
$$\Omega=\bigcup_{i=1}^nJ_i,\mbox{ where }J_i=(\alpha_i,\beta_i)\mbox{ for all }i\in\{1,\dots,n\}.$$
On $\Omega$ we consider the Lebesgue measure $dx$. We denote by $\partial\Omega$ the boundary of $\Omega$,
$$\partial\Omega=\{\alpha_i,\beta_i : i\in\{1,\dots,n\}\}.$$
For a function $f$ on $\partial\Omega$ we use the notation 
$$\int_{\partial\Omega}f=\sum_{i=1}^n(f(\beta_i)-f(\alpha_i)).$$
Consider the subspace of infinitely differentiable compactly supported functions on $\Omega$, $C_c^\infty(\Omega)$. We define the {\it differential/momentum operator} $\Ds$ on $C_c^\infty(\Omega)$:

$$\Ds f=\frac{1}{2\pi i}f',\quad (f\in C_c^\infty(\Omega)).$$

Define also the subspaces

\begin{multline}\label{eqd_0}
\mathscr D_0(\Omega)=\left\{f:\Omega\rightarrow\bc : f\mbox{ is absolutely continuous on each }J_i,\right.\\
\left. f(\alpha_i+)=f(\beta_i-)=0\mbox{ for all }i\mbox{ and }f'\in L^2(\Omega)\right\},
\end{multline}

\begin{equation}
\Dmax=\left\{f:\Omega\rightarrow\bc : f\mbox{ is absolutely continuous on each interval $J_i$ and $f'\in L^2(\Omega)$}\right\},
\label{eq1.1.1}
\end{equation}

\end{definition}

\begin{remark}\label{rem1.1}
If a function $f$ is absolutely continuous on each interval $J_i$ and $f'\in L^2(\Omega)$, then the values of the function $f$ at the endpoints $\alpha_i$ and $\beta_i$ are well defined. Indeed, fix $i\in\{1,\dots, n\}$ and a point $x_0\in (\alpha_i,\beta_i)$. Then, since $f$ is absolutely continuous and with $f'\in L^2(\Omega)\subset L^1(\Omega)$, one has 
$$f(x)=f(x_0)+\int_{x_0}^x f'(t)\,dt,\mbox{ for all } x\in (\alpha_i,\beta_i),$$
and therefore 
$$f(\alpha_i+)=f(x_0)-\int_{\alpha_i}^{x_0} f'(t)\,dt\mbox{ and, }f(\beta_i-)=f(x_0)+\int_{x_0}^{\beta_i} f'(t)\,dt.$$
This means that we can define $f(\alpha_i):=f(\alpha_i+)$, and similarly $f(\beta_i)$ by continuity.

For $f\in\Dmax$, we denote by $f(\vec\alpha)=\left(f(\alpha_1),\dots, f(\alpha_n)\right)$ and similarly for $f(\vec\beta)$.

\end{remark}

We are looking for closed symmetric and for self-adjoint extensions of the operator $\Ds$ on $C_c^\infty(\Omega)$.

\begin{definition}\label{def2.1}

We recall some notions about unbounded linear operators, see for example \cite[Chapter X]{Con90}. Let $H$ be a Hilbert space.

\begin{itemize}
	\item[$\bullet$] We denote by $\mathscr B(H)$ the set of bounded linear operators on $H$. 
	
	\item We denote the domain of an unbounded operator $T$ by $\mathscr D(T)$. 
	
	\item[$\bullet$] An operator $T_2$ is an {\it extension} of $T_1$ if $\D(T_2)$ contains $\D(T_1)$ and $T_2f=T_1 f$, for all $f\in \D(T_1)$. We write $T_1\subseteq T_2$.  
	
	\item[$\bullet$] An operator $T$ is {\it closed} if its graph is closed, i.e., if $\{f_n\}$ in $\D(T)$ converges to $f$ and $\{Tf_n\}$ converges to $g$ then $f\in\D(T)$ and $Tf=g$.

	\item[$\bullet$] For a densely defined unbounded operator $T:H_1\rightarrow H_2$, the {\it adjoint} operator $T^*:H_2\rightarrow H_1$ is defined on the set of vectors $g\in H_2$ with the property that the linear functional $\mathscr D(T)\ni f\mapsto \ip{Tf}{g}$ is bounded. In this case, by Riesz's lemma, there exists a unique element $T^*g\in H_1$ such that $\ip{Tf}{g}=\ip{f}{T^*g}$ for all $f\in\mathscr D(T)$.
	
	\item[$\bullet$]  An densely defined operator is called {\it symmetric} if 
$$\ip{Tf}{g}=\ip{f}{Tg},\mbox{ for all }f,g\in \D(T).$$

	\item[$\bullet$] An operator is called {\it self-adjoint} if $\D(T^*)=\D(T)$ and $T^*f=Tf$ for all $f\in \D(T)$.
	
\item An operator $N$ is called {\it normal} if it is closed, densely defined and $N^*N=NN^*$.
	
\end{itemize}

\end{definition}

\begin{definition}\label{defsp0}
Recall that a (possibly unbounded) linear operator $T:\mathcal H\rightarrow\mathcal K$ is called {\it boundedly invertible} if there is a bounded operator $S:\mathcal K\rightarrow\mathcal H$ such that $TS=I$ and $ST\subseteq I$. The {\it resolvent set} $\rho(T)$ for the operator $T$ is defined by 
$$\rho(T)=\{\lambda\in\bc : \lambda I-T \mbox{ is boundedly invertible}\}.$$
The {\it spectrum} of $T$ is defined as $\sigma(T)=\bc\setminus\rho(T)$. 

\end{definition}

\begin{definition}\label{defspectral}
If $X$ is a set, $\mathcal B$ is a $\sigma$-algebra of subsets of $X$, and $H$ is a Hilbert space, a {\it spectral measure/spectral resolution} for $(X,\mathcal B,H)$ is a function $E:\mathcal B\rightarrow\mathscr B(H)$ such that:
\begin{itemize}
	\item[(a)] for each $\Delta$ in $\mathcal B$, $E(\Delta)$ is a projection;
	\item[(b)] $E(\ty)=0$ and $E(X)=I_H$;
	\item[(c)] $E(\Delta_1\cap\Delta_2)=E(\Delta_1)E(\Delta_2)$ for $\Delta_1$ and $\Delta_2$ in $\mathcal B$;
	\item[(d)] if $\{\Delta_n\}_{n=1}^\infty$ are pairwise disjoint sets from $\mathcal B$, then 
	$$E\left(\bigcup_{n=1}^\infty\Delta_n\right)=\sum_{n=1}^\infty E(\Delta_n).$$
\end{itemize}

\end{definition}

\begin{theorem}\label{thspectral}{\bf Spectral theorem for unbounded self-adjoint operators.} If $A$ is a self-adjoint operator on $H$, then there exists a unique spectral measure $E$ defined on the Borel subsets of $\br$ such that 
\begin{itemize}
	\item[(a)] $A=\int x\,dE(x)$;
	\item[(b)] $E(\Delta)=0$ if $\Delta\cap\sigma(A)=\ty$;
	\item[(c)] if $U$ is an open subset of $\br$ and $U\cap\sigma(A)\neq\ty$, then $E(U)\neq 0$;
	\item[(d)] if $B\in\mathscr B(H)$ such that $BA\subseteq AB$, then $B(\int\phi\,dE)\subseteq (\int \phi\,dE)B$ for every Borel function $\phi$ on $\br$. 
\end{itemize}

\end{theorem}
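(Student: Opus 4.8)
The plan is to reduce the unbounded case to the spectral theorem for bounded normal operators via the Cayley transform, and then to transport the resulting spectral measure from the circle back to the line. Since $A$ is self-adjoint one has $\sigma(A)\subseteq\br$, so $\pm i\in\rho(A)$ and the operators $A\pm iI$ are boundedly invertible; the \emph{Cayley transform} $U=(A-iI)(A+iI)^{-1}$ is therefore well defined. First I would verify that $U$ is unitary: using that $A$ is symmetric one checks $\|(A-iI)f\|^2=\|Af\|^2+\|f\|^2=\|(A+iI)f\|^2$ for $f\in\mathscr D(A)$, which shows $U$ is isometric, and self-adjointness of $A$ upgrades this to surjectivity. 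A short computation gives $U-I=-2i(A+iI)^{-1}$, so $\ker(U-I)=0$ and $\operatorname{ran}(U-I)=\mathscr D(A)$ is dense; in particular $1$ is not an eigenvalue of $U$.

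Next I would apply the spectral theorem for the bounded normal (here unitary) operator $U$ to obtain a unique spectral measure $F$ on the Borel subsets of the unit circle $\bt$ with $U=\int_{\bt}z\,dF(z)$. The relation $\ker(U-I)=0$ forces $F(\{1\})=0$, so $F$ lives on $\bt\setminus\{1\}$. The inverse Cayley map $\varphi(z)=i\frac{1+z}{1-z}$ is a homeomorphism of $\bt\setminus\{1\}$ onto $\br$, and I would \emph{define} the candidate spectral measure on $\br$ by pushing $F$ forward, $E(\Delta):=F(\varphi^{-1}(\Delta))$ for Borel $\Delta\subseteq\br$. Checking that $E$ satisfies (a)--(d) of Definition \ref{defspectral} is routine, since $\varphi^{-1}$ is a Borel isomorphism and $F$ is a spectral measure.

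The heart of the argument is establishing $A=\int x\,dE(x)$, statement (a). Through the functional calculus for $U$, the pushforward identity translates integrals against $E$ into integrals against $F$ of composed functions; applying this to the unbounded function $x=\varphi(z)=i\frac{1+z}{1-z}$ recovers the formula $A=i(I+U)(I-U)^{-1}$ on the correct domain. I expect the domain bookkeeping here to be the main obstacle: one must show that $\mathscr D\!\left(\int x\,dE\right)=\{g:\int|x|^2\,d\ip{E(x)g}{g}<\infty\}$ coincides with $\mathscr D(A)$, and that the two operators agree there, rather than merely agreeing on a common core. This is where the density of $\operatorname{ran}(U-I)$ and the vanishing of $F$ at $1$ are used decisively.

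Finally, for uniqueness and for properties (b)--(d) I would argue as follows. Any spectral measure $E'$ with $A=\int x\,dE'$ determines, through its functional calculus, the bounded operators $(A\pm iI)^{-1}$ and hence $U$; by the uniqueness half of the bounded spectral theorem the associated circle measure must be $F$, so $E'=E$. Properties (b) and (c), asserting that the closed support of $E$ equals $\sigma(A)$, follow by transporting through $\varphi$ the corresponding statement for $U$ (that the support of $F$ is $\sigma(U)$), together with the spectral correspondence between $\sigma(A)$ and $\sigma(U)$ furnished by the Cayley transform. For the commutation statement (d), if $B\in\mathscr B(H)$ satisfies $BA\subseteq AB$, then $B$ commutes with the resolvents $(A\pm iI)^{-1}$ and therefore with $U$; the commutation clause of the bounded spectral theorem then gives that $B$ commutes with every $F$-projection, hence with every $E$-projection and with every $\int\phi\,dE$.
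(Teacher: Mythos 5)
The paper offers no proof of this statement: it is recalled as standard background from \cite[Chapter X]{Con90} (cited in Definition \ref{def2.1}), so there is nothing internal to compare against. Your Cayley-transform argument is exactly the classical proof found in that reference and is correct as sketched; in particular you rightly isolate the only delicate point, the domain identity in (a), where the cleanest finish is to note that once $A\subseteq\int x\,dE$ is established on $\mathscr D(A)=\operatorname{ran}(U-I)$, equality follows because both operators are self-adjoint and self-adjoint operators are maximally symmetric.
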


\begin{remark}\label{remspec}

 Note that, starting with a fixed selfadjoint operator $A$ in a Hilbert space $H$, the corresponding projection valued measure $E$ then induces a spectral representation for $A$  via a system of scalar measures indexed by the vectors in $H$. These are defined for $h\in H$ by 
\begin{equation}
E_{h,h}(\Delta)=\ip{E(\Delta)h}{h},\mbox{ for all Borel sets }\Delta.
\label{eqremspec0}
\end{equation}

Let $A$ be a self-adjoint operator in a Hilbert space $H$ with spectral resolution $E$; see Definition \ref{defspectral}. Then the following three conclusions follow immediately :
\begin{enumerate}
	\item If $\varphi:\br\rightarrow \bc$ is measurable, then the operator 
	\begin{equation}
	\varphi(A):=\int_{\br}\varphi(x)\,dE(x)
	\label{eqremspec1}
	\end{equation}	
	is well defined in $H$ and normal. 
	\item The dense domain of $\varphi(A)$, denoted $\D(\varphi(A))$, consists of all vectors $h\in H$ such that 
	\begin{equation}
	\int_\br |\varphi(x)|^2\,dE_{h,h}<\infty,
	\label{eqremspec2}
	\end{equation}
	\item For all $h\in \D(\varphi(A))$, 
	\begin{equation}
	\|\varphi(A)h\|^2=\int_{\br}|\varphi(x)|^2\,dE_{h,h}.
	\label{eqremspec3}
	\end{equation}
\end{enumerate}

\end{remark}

\begin{definition}\label{defstone}
A {\it strongly continuous one parameter unitary group} is a function $U:\br\rightarrow\mathscr B(H)$ such that for all $s$ and $t$ in $\br$:
\begin{itemize}
	\item[(a)] $U(t)$ is a unitary operator;
	\item[(b)] $U(s+t)=U(s)U(t)$;
	\item[(c)] if $h\in H$ and $t_0\in\br$ then $U(t)h\rightarrow U(t_0)h$ as $t\rightarrow t_0$.
\end{itemize}

\end{definition}

\begin{theorem}\label{thstone0}
Let $A$ be a self-adjoint operator on $H$ and let $E$ on $(X,\mathcal B,H)$ be its spectral measure. Define 
$$U(t)=\exp(2\pi it A)=\int e^{2\pi itx}\,d E(x),\quad (t\in\br).$$
Then 
\begin{itemize}
	\item[(a)] $(U(t))_{t\in\br}$ is a strongly continuous one parameter group;
	\item[(b)] if $h\in\mathscr D(A)$, then 
	$$\lim_{t\rightarrow 0}\frac{1}{t}(U(t)h-h)=2\pi i Ah;$$
	\item[(c)] if $h\in H$ and $\lim_{t\rightarrow 0}\frac1t(U(t)h-h)$ exists, then $h\in\mathscr D(A)$. Consequently, $\D(A)$ is invariant under $U(t)$.
\end{itemize}

\end{theorem}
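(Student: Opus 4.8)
The plan is to derive everything from the Borel functional calculus furnished by the spectral theorem (Theorem \ref{thspectral}) together with the scalar-measure identities recorded in Remark \ref{remspec}. Throughout I write $\varphi_t(x)=e^{2\pi itx}$, so that $U(t)=\varphi_t(A)=\int\varphi_t\,dE$. For part (a), unitarity and the group law are immediate from the multiplicativity of the functional calculus, which itself rests on property (c) of the spectral measure in Definition \ref{defspectral}: since $\varphi_s\varphi_t=\varphi_{s+t}$ and $\overline{\varphi_t}=\varphi_{-t}$, one gets $U(s)U(t)=U(s+t)$ and $U(t)^*U(t)=U(t)U(t)^*=U(0)=I$, using $|\varphi_t|\equiv 1$. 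For strong continuity I would invoke the isometry identity of Remark \ref{remspec}, item (3),
$$\norm{U(t)h-U(t_0)h}^2=\int_\br|\varphi_t(x)-\varphi_{t_0}(x)|^2\,dE_{h,h}(x),$$
whose integrand tends to $0$ pointwise as $t\to t_0$ and is dominated by the constant $4$, integrable because $E_{h,h}(\br)=\norm{h}^2<\infty$; dominated convergence then closes the argument.

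For part (b), fix $h\in\D(A)$ and apply item (3) of Remark \ref{remspec} once more to write
$$\norm{\tfrac1t(U(t)h-h)-2\pi iAh}^2=\int_\br\left|\frac{\varphi_t(x)-1}{t}-2\pi ix\right|^2\,dE_{h,h}(x).$$
The integrand converges to $0$ pointwise (it is the error in the difference quotient of $t\mapsto\varphi_t(x)$ at $t=0$), and the elementary estimate $|e^{i\theta}-1|\le|\theta|$ gives $\left|\frac{\varphi_t(x)-1}{t}\right|\le 2\pi|x|$, so the whole integrand is bounded by $16\pi^2x^2$. This dominating function is $E_{h,h}$-integrable precisely because $h\in\D(A)$ means $\int x^2\,dE_{h,h}<\infty$ (Remark \ref{remspec}, item (2), with $\varphi(x)=x$), so dominated convergence yields the stated limit.

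The crux is part (c), the converse, where dominated convergence is unavailable and I would substitute Fatou's lemma. Suppose $g=\lim_{t\to0}\frac1t(U(t)h-h)$ exists; then $\norm{\frac1t(U(t)h-h)}$ stays bounded as $t\to 0$. Choose any sequence $t_n\to0$ and set $f_n(x)=\left|\frac{\varphi_{t_n}(x)-1}{t_n}\right|^2$, which converges pointwise to $4\pi^2x^2$. Item (3) of Remark \ref{remspec} gives $\int f_n\,dE_{h,h}=\norm{\frac1{t_n}(U(t_n)h-h)}^2\to\norm{g}^2$, so Fatou's lemma yields
$$\int_\br 4\pi^2x^2\,dE_{h,h}(x)\le\liminf_n\int_\br f_n\,dE_{h,h}(x)=\norm{g}^2<\infty.$$
Hence $\int x^2\,dE_{h,h}<\infty$, which by item (2) of Remark \ref{remspec} places $h\in\D(A)$, and part (b) then identifies $g=2\pi iAh$.

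Finally, the invariance of $\D(A)$ under each $U(t)$ follows once we observe that $E_{U(t)h,U(t)h}=E_{h,h}$ as measures: indeed $U(t)$ commutes with every $E(\Delta)$ (both are functions of $A$, cf.\ property (d) of Theorem \ref{thspectral}) and is unitary, so $E_{U(t)h,U(t)h}(\Delta)=\ip{E(\Delta)U(t)h}{U(t)h}=\ip{E(\Delta)h}{h}=E_{h,h}(\Delta)$. Consequently the integrability condition $\int x^2\,dE<\infty$ characterizing $\D(A)$ is preserved by $U(t)$. I expect the Fatou step in part (c) to be the only genuine subtlety; the remaining work is routine bookkeeping with the functional calculus and the finite measures $E_{h,h}$.
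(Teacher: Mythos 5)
Your proof is correct in all its parts, but there is nothing in the paper to compare it against: Theorem \ref{thstone0} appears in the preliminaries section and is stated without proof, quoted as standard background (cf.\ \cite[Chapter X]{Con90}). That said, your argument is precisely the standard spectral-calculus proof that the cited literature gives: unitarity and the group law from multiplicativity of the Borel functional calculus applied to $\varphi_t(x)=e^{2\pi itx}$; strong continuity and part (b) by dominated convergence against the finite measures $E_{h,h}$, with the correct dominating functions ($4$, resp.\ $16\pi^2x^2$, the latter integrable exactly because $h\in\D(A)$); the converse (c) by Fatou applied to $f_n=\bigl|\frac{\varphi_{t_n}-1}{t_n}\bigr|^2\to 4\pi^2x^2$, which is indeed the one genuinely non-routine step; and invariance of $\D(A)$ via $E_{U(t)h,U(t)h}=E_{h,h}$. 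Two minor bookkeeping remarks. First, in part (b), writing $\frac1t(U(t)h-h)-2\pi iAh=\psi_t(A)h$ with $\psi_t(x)=\frac{e^{2\pi itx}-1}{t}-2\pi ix$ silently uses additivity of the (unbounded) functional calculus on the common domain; this is legitimate here since $(\varphi_t-1)/t$ is bounded for each fixed $t$ and $h\in\D(A)$, but it deserves a sentence. Second, the invariance statement at the end of (c) also admits a one-line proof directly from (b) and (c): for $h\in\D(A)$, $\frac1s\bigl(U(s)U(t)h-U(t)h\bigr)=U(t)\cdot\frac1s\bigl(U(s)h-h\bigr)\to U(t)(2\pi iAh)$, so $U(t)h\in\D(A)$ by (c); your measure-preservation argument via property (d) of Theorem \ref{thspectral} is equally valid and slightly more self-contained.
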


\begin{theorem}\label{thstone} {\bf Stone's Theorem.} 
If $U$ is a strongly continuous one parameter unitary group, then there exists a self-adjoint operator $A$ such that $U(t)=\exp(2\pi itA)$, $t\in \br$, and conversely. The self-adjoint operator $A$ is called the infinitesimal generator for $U$. 

\end{theorem}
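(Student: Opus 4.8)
The converse direction is already contained in the results above: given a self-adjoint $A$, the spectral theorem (Theorem~\ref{thspectral}) produces its spectral resolution $E$, and Theorem~\ref{thstone0} shows that $U(t)=\exp(2\pi itA)=\int e^{2\pi itx}\,dE(x)$ is a strongly continuous one-parameter unitary group with infinitesimal generator $A$. So the plan is to prove the forward direction: starting from an abstract strongly continuous unitary group $U$, construct a self-adjoint $A$ with $U(t)=\exp(2\pi itA)$. I would define $A$ by declaring $h\in\D(A)$ precisely when $\lim_{t\to0}\frac1t(U(t)h-h)$ exists, and setting $2\pi i Ah$ equal to that limit. The substance of the theorem is then threefold: (i) $\D(A)$ is dense, (ii) $A$ is self-adjoint, and (iii) exponentiating $A$ recovers $U$.

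For density I would use the standard smoothing (G\aa rding) trick. For $h\in H$ and $\varphi\in C_c^\infty(\br)$ the Bochner integral $h_\varphi=\int_\br\varphi(t)U(t)h\,dt$ is well defined by strong continuity, and the substitution $r=s+t$ gives $\frac1s(U(s)h_\varphi-h_\varphi)=\int_\br\frac{\varphi(r-s)-\varphi(r)}{s}U(r)h\,dr\to-\int_\br\varphi'(r)U(r)h\,dr$ as $s\to0$, so $h_\varphi\in\D(A)$. Taking $\varphi=\varphi_n$ an approximate identity concentrating at $0$ yields $h_{\varphi_n}\to h$, proving density. Symmetry is a short computation: for $h,k\in\D(A)$, using $U(t)^*=U(-t)$, one gets $\ip{2\pi i Ah}{k}=\lim_t\tfrac1t\ip{U(t)h-h}{k}=\lim_t\ip{h}{\tfrac1t(U(-t)k-k)}=\ip{h}{-2\pi i Ak}$, which rearranges (using $\overline{2\pi i}=-2\pi i$) to $\ip{Ah}{k}=\ip{h}{Ak}$.

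The main obstacle is upgrading symmetry to genuine self-adjointness, and I would do this via the deficiency-index criterion, for which I first need $A$ closed with $U$-invariant domain. Invariance together with commutation, namely $U(t)h\in\D(A)$ and $AU(t)h=U(t)Ah$ for $h\in\D(A)$, follows because the group is abelian and each $U(t)$ is bounded; closedness follows by passing to the limit in the identity $U(t)h-h=2\pi i\int_0^tU(s)Ah\,ds$, using that $\|U(s)(Ah_n-k)\|=\|Ah_n-k\|$ is uniform in $s$. To annihilate the deficiency spaces, suppose $A^*g=ig$; for $h\in\D(A)$ set $\psi(t)=\ip{U(t)h}{g}$ and differentiate, using invariance and $A^*g=ig$, to obtain $\psi'(t)=2\pi i\ip{AU(t)h}{g}=2\pi i\ip{U(t)h}{A^*g}=2\pi\psi(t)$. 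Hence $\psi(t)=\psi(0)e^{2\pi t}$, but $|\psi(t)|\le\|h\|\,\|g\|$ is bounded, forcing $\psi\equiv0$, so $\ip{h}{g}=0$ on a dense set and $g=0$; the case $A^*g=-ig$ is symmetric. Thus $\ker(A^*\mp i)=\{0\}$, and since $A$ is closed and symmetric it is self-adjoint.

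Finally I would identify the two groups. With $A$ now self-adjoint, Theorem~\ref{thstone0} makes $V(t)=\exp(2\pi itA)$ a strongly continuous unitary group with generator $A$. For $h\in\D(A)$ the map $t\mapsto V(-t)U(t)h$ is differentiable with derivative $2\pi i V(-t)\bigl(U(t)Ah-AU(t)h\bigr)=0$, so it is constant, equal to its value $h$ at $t=0$; hence $U(t)h=V(t)h$ on the dense set $\D(A)$, and by boundedness $U(t)=V(t)$ for all $t$, i.e. $U(t)=\exp(2\pi itA)$. I expect the deficiency-index step to be the crux: density and symmetry are routine, whereas the boundedness of $\psi$ is exactly what excludes nontrivial obstructions and pins down self-adjointness.
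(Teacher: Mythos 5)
The paper offers no proof of this statement to compare against: Stone's Theorem is quoted as classical background alongside Theorems~\ref{thspectral} and \ref{thstone0} (see \cite[Chapter X]{Con90}), so your proposal can only be judged on its own terms.

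On those terms it is correct, and it is the standard von Neumann/G\aa rding-vector proof. The converse direction is indeed already dispatched by Theorem~\ref{thstone0}. In the forward direction, the smoothing computation $\frac1s(U(s)h_\varphi-h_\varphi)=\int_\br \frac{\varphi(r-s)-\varphi(r)}{s}U(r)h\,dr\to-\int_\br\varphi'(r)U(r)h\,dr$ is right (uniform convergence of the difference quotients of $\varphi\in C_c^\infty(\br)$ on a compact support), and an approximate identity gives density; the sign bookkeeping in the symmetry computation, with the paper's $2\pi i$ normalization, checks out. Two places deserve one more line each when written in full: (a) the identity $U(t)h-h=2\pi i\int_0^t U(s)Ah\,ds$ used for closedness is not free --- it follows from the strong derivative $\frac{d}{dt}U(t)h=2\pi i\,U(t)Ah$ (itself a consequence of the group law and boundedness of each $U(t)$) together with the fundamental theorem of calculus for continuous vector-valued functions; (b) the differentiation of $t\mapsto V(-t)U(t)h$ should be justified by splitting the difference quotient as $V(-t-s)\frac{U(t+s)h-U(t)h}{s}+\frac{V(-t-s)-V(-t)}{s}U(t)h$ and using that the unitaries are uniformly bounded, rather than invoking a product rule off the shelf. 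The deficiency-space step is, as you say, the crux, and your execution is exact: for $A^*g=ig$ one gets $\psi'=2\pi\psi$ (the factor $2\pi$, not $1$, precisely because of the normalization), so $\psi(t)=\psi(0)e^{2\pi t}$ is unbounded unless $\psi\equiv 0$, killing $\ker(A^*-i)$, with the case $A^*g=-ig$ handled by letting $t\to-\infty$; combined with closedness and the standard criterion that a closed symmetric operator with trivial deficiency spaces is self-adjoint, this is a complete argument. An alternative classical route --- positive-definiteness of $t\mapsto\ip{U(t)h}{h}$ plus Bochner's theorem, or the Cayley transform --- exists, but your choice is the most economical given that the paper already supplies Theorem~\ref{thstone0} for the converse and for the final identification step.
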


We will also need the following well-known lemma about multiplication operators.

\begin{lemma}\label{lemfu3}
Let $(X,\mu)$ be a $\sigma$-finite measure space and let $\phi:X\rightarrow\bc$ be a measurable function. Let $\D=\{f\in L^2(\mu) : \phi f\in L^2(\mu)\}$ and define $Af=\phi f$ for all $f\in\D$. Then $A$ is a closed operator, $\D(A^*)=\D$, and $A^*f=\cj \phi f$ for $f\in \D$. In particular, if $f$ is real-valued, then $A$ is self-adjoint. 
\end{lemma}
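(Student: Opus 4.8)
The plan is to treat $A$ as the maximal operator of multiplication by $\phi$ and to compare it with multiplication by $\cj\phi$; since $|\cj\phi|=|\phi|$, the natural domain of the latter is again $\D$, so the whole statement amounts to identifying $A^*$ with the $\cj\phi$-multiplication operator. First I would record that $\D$ is dense, which is needed even to speak of $A^*$: writing $X_n=\{x\in X : |\phi(x)|\le n\}$, one has $X_n\uparrow X$ because $\phi$ is finite-valued, and for any $f\in L^2(\mu)$ the truncations $f\chi_{X_n}$ lie in $\D$ (there $\phi f$ is bounded by $n|f|$) and converge to $f$ by dominated convergence. For closedness I would take $f_k\to f$ and $\phi f_k\to g$ in $L^2(\mu)$, pass to a subsequence along which both convergences hold $\mu$-a.e., and note that then $\phi f_k\to\phi f$ a.e.; uniqueness of a.e.\ limits forces $\phi f=g$ a.e., so $f\in\D$ and $Af=g$.

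The inclusion $\D\subseteq\D(A^*)$, together with the formula, is the easy half: for $f,g\in\D$ a direct manipulation gives $\ip{Af}{g}=\int\phi f\,\cj g\,d\mu=\ip{f}{\cj\phi g}$, and since $\cj\phi g\in L^2(\mu)$ the functional $f\mapsto\ip{Af}{g}$ is bounded on $\D$; hence $g\in\D(A^*)$ and $A^*g=\cj\phi g$.

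The hard part will be the reverse inclusion $\D(A^*)\subseteq\D$, and this is exactly where $\sigma$-finiteness is used. Given $g\in\D(A^*)$ with $h:=A^*g$, the defining identity $\ip{Af}{g}=\ip{f}{h}$ reads $\int f\,(\phi\cj g-\cj h)\,d\mu=0$ for every $f\in\D$. Fixing a $\sigma$-finite exhaustion $X=\bigcup_m Y_m$ with $\mu(Y_m)<\infty$ and setting $E_{n,m}=X_n\cap Y_m$, I observe that $\phi$ is bounded and $\mu$ is finite on $E_{n,m}$, so every bounded measurable function supported in $E_{n,m}$ belongs to $\D$. Writing $w=\phi\cj g-\cj h$ (which is integrable on $E_{n,m}$, since there $\phi\cj g$ and $\cj h$ are both in $L^1$), I test against a bounded function supported in $E_{n,m}$ of unit modulus matching the phase of $w$; this forces $\int_{E_{n,m}}|w|\,d\mu=0$, hence $w=0$ a.e.\ on $E_{n,m}$. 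As $E_{n,m}\uparrow X$, we get $\cj\phi g=h$ a.e.\ on $X$, so in particular $\cj\phi g\in L^2(\mu)$, whence $g\in\D$ and $A^*g=\cj\phi g$. This closes $\D(A^*)=\D$.

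Finally, when $\phi$ is real-valued one has $\cj\phi=\phi$, so the formula gives $A^*g=\phi g=Ag$ on $\D(A^*)=\D=\D(A)$, and $A$ is self-adjoint. The only genuinely delicate point is the construction of the test functions in the last step: one must simultaneously control the support (to exploit $\sigma$-finiteness) and the growth of $\phi$ (to stay inside $\D$), which is why the double truncation by $X_n$ and $Y_m$ is essential.
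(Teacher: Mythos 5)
Your proof is correct and follows essentially the same route as the paper: density via truncations on sets where $\phi$ is bounded, the easy inclusion $\D\subseteq\D(A^*)$ via Cauchy--Schwarz, and the identification $A^*g=\cj\phi g$ by testing against suitable functions in $\D$. If anything, your write-up is more complete than the paper's, which never explicitly verifies closedness and asserts the a.e.\ identity $A^*g=\cj\phi g$ without detail, whereas you prove closedness by an a.e.-subsequence argument and carry out the identification rigorously with the double truncation $E_{n,m}=X_n\cap Y_m$ and phase-matched test functions, making the role of $\sigma$-finiteness explicit.
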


\begin{proof}
First, the domain $\D$ is dense, because one can consider functions of the form $$\chi_{\left\{x\in M : -n\leq \textup{Re}(\phi(x))\leq n, -m\leq \textup{Im}(\phi(x))\leq m\right\}},\quad m,n\in\bn, M\subseteq X \mbox{ measurable,}$$
and they are in the domain $\D$ and span $L^2(\mu)$. Thus the operator $A$ is densely defined. 

Consider now $g\in\D(A^*)$. Then the map $\D\ni f\mapsto \ip{Af}{g}=:\varphi_g(f)$ is a bounded linear functional, i.e., 
$$\left|\int \phi f\cj g\,d\mu\right|\leq C\|f\|,\quad (f\in\D).$$
This implies that $\varphi_g$ can be extended continuously to the whole space $L^2(\mu)$ and therefore there exists an element $A^*g\in L^2(\mu)$ such that $\varphi_g(f)=\ip{f}{A^*g}$ for all $f\in \D$. Then 
$$\int f\cj{\cj \phi g}\,d\mu=\int f\cdot A^*g\,d\mu,\quad (f\in\D).$$
But this means that $A^*g=\cj \phi g$ a.e.

Conversely, if $g\in\D$ we want to see that $g\in\D(A^*)$. But, if $\phi g\in L^2(\mu)$, then $\cj \phi g\in L^2(\mu)$, so $g\in \D(A^*)$.

\end{proof}

\begin{corollary}\label{corfu3.1}
Let $\Lambda$ be some nonempty index set. The multiplication operator $$M_{I}(a_\lambda)_{\lambda\in\Lambda}=(\lambda a_\lambda)_{\lambda\in\Lambda},\quad \D(M_I)=\left\{(a_\lambda)_{\lambda\in\Lambda}\in l^2(\Lambda) : \sum_{\lambda\in\Lambda}|\lambda|^2|a_\lambda|^2<\infty\right\},$$
is self-adjoint. 

\end{corollary}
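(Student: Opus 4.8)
The statement is the special case of Lemma~\ref{lemfu3} in which the underlying $L^2$-space is a sequence space. The plan is therefore to exhibit $M_I$ as a multiplication operator of the type treated there and to read off self-adjointness from the real-valuedness of the multiplier.

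Concretely, I would take $X=\Lambda$ equipped with the counting measure $\mu$, so that $L^2(\mu)=l^2(\Lambda)$, with standard orthonormal basis $(e_\lambda)_{\lambda\in\Lambda}$, $e_\lambda=\chi_{\{\lambda\}}$. Define the multiplier $\phi\colon\Lambda\to\bc$ to be the inclusion of the index set into the scalars, $\phi(\lambda)=\lambda$ (this is what the subscript $I$ records); it is measurable for the power-set $\sigma$-algebra on $\Lambda$. With these choices the operator $Af=\phi f$ of Lemma~\ref{lemfu3} is exactly $M_I$, and its natural domain $\D=\{f\in l^2(\Lambda):\phi f\in l^2(\Lambda)\}$ coincides verbatim with the stated $\D(M_I)=\{(a_\lambda)_{\lambda\in\Lambda}:\sum_{\lambda\in\Lambda}|\lambda|^2|a_\lambda|^2<\infty\}$. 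Since the index set $\Lambda$ consists of (real) spectral values, $\phi$ is real-valued, i.e. $\cj\phi=\phi$; the final clause of Lemma~\ref{lemfu3} then gives $\D(M_I^*)=\D(M_I)$ together with $M_I^*=M_I$, which is precisely self-adjointness.

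The one point that needs care, and the only place where the argument is not pure bookkeeping, is the $\sigma$-finiteness hypothesis of Lemma~\ref{lemfu3}: the counting measure on $\Lambda$ is $\sigma$-finite exactly when $\Lambda$ is countable. In the applications of this paper $\Lambda$ indexes an orthonormal basis of eigenvectors and is countable, so the hypothesis holds and the reduction above is legitimate. If one wishes to avoid any countability assumption, I would instead give the short direct argument, which sidesteps the measure theory entirely: symmetry is immediate from $\ip{M_Ia}{b}=\sum_{\lambda}\lambda\,a_\lambda\cj{b_\lambda}=\ip{a}{M_Ib}$ for real $\lambda$, and for $g\in\D(M_I^*)$, pairing $f\mapsto\ip{M_If}{g}$ against each basis vector $e_\lambda$ shows that the $\lambda$-th coordinate of $M_I^*g$ equals $\cj\lambda\,g_\lambda$, whence $\sum_\lambda|\lambda|^2|g_\lambda|^2=\|M_I^*g\|^2<\infty$ and therefore $g\in\D(M_I)$. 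This inclusion $\D(M_I^*)\subseteq\D(M_I)$, combined with symmetry, yields self-adjointness. Either way, the substantive content is confined to identifying the correct domain; everything else is a direct transcription of the lemma.
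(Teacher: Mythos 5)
Your proof is correct and is essentially the paper's own: the paper likewise disposes of the corollary in one line by applying Lemma \ref{lemfu3} to the set $\Lambda$ with the discrete (counting) measure and $\phi(\lambda)=\lambda\in\br$. Your extra observation that the $\sigma$-finiteness hypothesis of Lemma \ref{lemfu3} forces $\Lambda$ to be countable --- harmless in this paper, where $\Lambda$ arises as the discrete spectrum of a self-adjoint extension and is therefore countable --- together with the direct basis-vector argument covering uncountable $\Lambda$, is a careful refinement that the paper's one-line proof silently omits.
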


\begin{proof}
Using Lemma \ref{lemfu3} for the space $\Lambda$ with the discrete measure and $\phi(\lambda)=\lambda\in\br$, we get that the operator $M_I$ is self-adjoint. 
\end{proof}

Next, we recall some of the main ideas in Fuglede's paper \cite{Fug74}.
\begin{definition}\label{deffu}
For an open set $\Omega$ in $\br^d$, consider the partial differential operators $\frac{1}{2\pi i}\frac{\partial}{\partial x_j}$, $j=1,\dots,d$, defined on the space of infinitely differentiable functions with compact support contained in $\Omega$, $C_c^\infty(\Omega)$. These operators are symmetric (by integration by parts). 

\begin{itemize}
	\item[$\bullet$] We say that $\Omega$ has the {\it extension property} if there are commuting self-adjoint extension operators $H_j$, i.e., 
$$\frac{1}{2\pi i}\frac{\partial}{\partial x_j}\subseteq H_j, \quad j=1,\dots,d.$$

	\item[$\bullet$] {\it Commutativity} for the extension operators $H_j$ is in the strong sense of spectral resolutions. More precisely, all projections associated to the spectral resolutions of the operators $H_j$ must commute.  
	
	\item[$\bullet$] A set $\Omega$ of finite Lebesgue measure is called {\it spectral} if there exists a set of frequencies $\Lambda\subset\br^d$, such that the family of exponential functions $\{e_\lambda(x)=e^{2\pi i\lambda\cdot x} : \lambda\in\Lambda\}$ forms an orthogonal basis for $L^2(\Omega)$. The set $\Lambda$ is called a {\it spectrum} for the set $\Omega$. 
	
	\item[$\bullet$] A set $\Omega$ of finite Lebesgue measure is said to {\it tile $\br^d$ by translations} if there exists a set of vectors $\Gamma\subset\br^d$ such that the translates $\{\Omega+\gamma :\gamma\in\Gamma\}$ cover $\br^d$ up to measure zero, and if the intersections $(\Omega+\gamma)\cap(\Omega+\gamma')$ have measure zero for $\gamma\neq \gamma'$ in $\Gamma$. 
	
\end{itemize}

\end{definition}

\begin{remark}
In general, when $\Omega$ is given, the individual symmetric operators will have self-adjoint extensions, but the added condition that there is a choice of $d$ {\it mutually commuting} self-adjoint extensions is a strong restriction. For example, if $d=2$, and if  $\Omega$ is a triangle or a disk, then there will not be commuting self-adjoint extensions (see \cite{Fug74}). This point is clarified in the next theorem:
\end{remark}

\begin{theorem}\label{thfug}\cite{Fug74,Jor82,Ped87,JoPe92,JoPe00} Let $\Omega\subset\br^d$ be open and connected, with finite and positive Lebesgue measure. Then $\Omega$ has the extension property if and only if it is a spectral set. Moreover, with $\Omega$ given, there is a one-to-one correspondence between the two sets of subsets:
\begin{equation}
\{\Lambda\subset\br^d : \Lambda\mbox{ is a spectrum for }\Omega\}
\label{eqjp2.4}
\end{equation}
and 
\begin{multline}
\left\{\Lambda \subset\br^d :\Lambda\mbox{ is the joint spectrum of some commutative family }\right.
\label{eqjp2.5}\\
\left.(H_1,\dots,H_d) \mbox{ of self-adjoint extensions}\right\}.
\end{multline}
This correspondence is determined as follows:
\begin{itemize}
	\item[(a)] If the extensions $(H_1,\dots,H_d)$ are given, then $\lambda\in\Lambda$ if and only if 
	\begin{equation}
	e_\lambda\in\bigcap_j\D(H_j).
	\label{eqjp2.6}
	\end{equation}
	\item[(b)] If, conversely, $\Lambda$ is a spectrum for $\Omega$ at the outset, then the ansatz \eqref{eqjp2.6} and 
	\begin{equation}
	H_je_\lambda=\lambda_j e_\lambda,\quad \lambda\in\Lambda
	\label{eqjp2.7}
	\end{equation}
	determine uniquely a set of commuting extensions. 
\end{itemize}

If $\Omega$ is only assumed open, the the spectral-set property implies the extension property, but not conversely. 

\end{theorem}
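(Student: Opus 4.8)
The plan is to prove the two implications separately, using the multiplication–operator model of Corollary \ref{corfu3.1} for the forward direction and the translation structure of the unitary groups for the converse. First I would establish that \emph{spectral implies extension}, together with part (b). Given a spectrum $\Lambda$, normalize the exponentials to an orthonormal basis $\{e_\lambda/\norm{e_\lambda}\}_{\lambda\in\Lambda}$ of $L^2(\Omega)$, which identifies $L^2(\Omega)$ with $l^2(\Lambda)$. Under this identification define $H_j$ to be multiplication by the real coordinate $\lambda\mapsto\lambda_j$; by Corollary \ref{corfu3.1} each $H_j$ is self-adjoint, and since all the $H_j$ are diagonalized by the single basis $\{e_\lambda\}$, their spectral projections are the projections onto closed spans of subfamilies and hence mutually commute. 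That $H_j$ extends $\frac{1}{2\pi i}\frac{\partial}{\partial x_j}$ follows from one integration by parts: for $f\in C_c^\infty(\Omega)$ the boundary terms vanish and $\ip{\frac{1}{2\pi i}\frac{\partial f}{\partial x_j}}{e_\lambda}=\lambda_j\ip{f}{e_\lambda}$, so the Fourier coefficients of $\frac{1}{2\pi i}\frac{\partial f}{\partial x_j}$ are exactly $\lambda_j$ times those of $f$, which is the action of $H_j$; moreover $f\in\mathscr D(H_j)$ because $\frac{1}{2\pi i}\frac{\partial f}{\partial x_j}\in L^2(\Omega)$. This realizes the ansatz \eqref{eqjp2.6}--\eqref{eqjp2.7}, and it is the only possibility: any commuting self-adjoint extensions with $e_\lambda\in\bigcap_j\mathscr D(H_j)$ must satisfy $H_je_\lambda=\lambda_je_\lambda$, since (as noted below) $\mathscr D(H_j)$ sits inside the domain of the maximal momentum operator, and the prescribed values on the total set $\{e_\lambda\}$ determine the closed operator uniquely.

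For the converse I would start from commuting self-adjoint extensions $(H_1,\dots,H_d)$, take the joint spectral measure $E$ on $\br^d$ furnished by the spectral theorem, equivalently the strongly continuous unitary representation $U(t)=\exp(2\pi i\sum_j t_jH_j)$ of $\br^d$. The first structural remark is that $H_j$ lies below the maximal momentum operator: taking adjoints of $\frac{1}{2\pi i}\frac{\partial}{\partial x_j}\subseteq H_j$ gives $H_j=H_j^*\subseteq$ the maximal operator, so every $f\in\mathscr D(H_j)$ is weakly differentiable with $H_jf=\frac{1}{2\pi i}\frac{\partial f}{\partial x_j}$. Consequently a joint eigenvector, $H_j\psi=\lambda_j\psi$ for all $j$, solves the first order system $\frac{\partial\psi}{\partial x_j}=2\pi i\lambda_j\psi$ weakly, hence equals a constant multiple of $e_\lambda$ on each connected component; here the \emph{connectedness} hypothesis enters decisively, forcing $\psi=c\,e_\lambda$ globally and each joint eigenspace to be one dimensional. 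The joint eigenvalues are real, and the exponentials for distinct $\lambda,\lambda'$ are orthogonal, being eigenvectors of the self-adjoint $H_j$ for distinct eigenvalues (choosing an index $j$ with $\lambda_j\neq\lambda_j'$).

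The main obstacle is completeness: showing that the orthogonal family $\{e_\lambda:\lambda\in\Lambda\}$ indexed by the joint point spectrum actually spans $L^2(\Omega)$, equivalently that $E$ is purely atomic with no continuous part. The tool I would use is that $U(t)$ acts by \emph{local translation}: fixing $f\in C_c^\infty(\Omega)$ and differentiating the curves $s\mapsto U(st)f$ and $s\mapsto f(\cdot\pm st)$ in $s$, both satisfy the same first order evolution generated by $2\pi i\sum_j t_jH_j$ for as long as the translate stays inside $\Omega$, so they agree there. Because $\Omega$ is bounded these translations cannot continue indefinitely, and unitarity forces the boundary data encoded by the $H_j$ to recirculate the mass; it is this interplay that collapses the spectrum to a discrete set and renders the exponential eigenfunctions complete. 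I expect this step---ruling out continuous spectrum from the local translation structure on a bounded connected $\Omega$---to be the genuinely hard part; it is the heart of Fuglede's theorem and the subsequent refinements \cite{Fug74,Jor82,Ped87,JoPe92,JoPe00}, which I would either reproduce or cite. Granting completeness, (a) and (b) are mutually inverse bijections between spectra and joint spectra, since $\lambda\in\Lambda$ if and only if $e_\lambda\in\bigcap_j\mathscr D(H_j)$ if and only if $\lambda$ is a joint eigenvalue, and the eigenvalue relation $H_je_\lambda=\lambda_je_\lambda$ pins down the closed operators on the basis $\{e_\lambda\}$.

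Finally, for $\Omega$ merely open the forward construction is untouched, as it never used connectedness, so spectral still implies the extension property. The converse breaks precisely at the eigenfunction step: on a disconnected $\Omega$ the weak solutions of $\frac{\partial\psi}{\partial x_j}=2\pi i\lambda_j\psi$ may carry independent constants, and even independent exponentials, on different components, so commuting self-adjoint extensions need not assemble into a single global exponential basis. This is exactly what permits domains possessing the extension property that are nonetheless not spectral, and I would point to the standard counterexamples for the failure of the reverse implication in that generality.
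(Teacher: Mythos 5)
The paper itself does not prove Theorem \ref{thfug}: it is quoted as background, with the proof delegated to \cite{Fug74,Jor82,Ped87,JoPe92,JoPe00}, so the only internal material to compare against is the one-dimensional machinery of Sections 3--5. Measured against that, your forward direction (spectral $\Rightarrow$ extension, plus part (b)) is complete and is essentially the paper's own argument in Theorem \ref{thfu1}: conjugation by $\mathcal F_\Lambda$ to the multiplication model on $l^2(\Lambda)$, self-adjointness via Lemma \ref{lemfu3}/Corollary \ref{corfu3.1}, commutation of spectral projections because all the $H_j$ are diagonal in the single basis $\{e_\lambda\}$, and the extension property by one integration by parts on $C_c^\infty(\Omega)$. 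One precision issue: your justification of uniqueness in (b), ``the prescribed values on the total set $\{e_\lambda\}$ determine the closed operator uniquely,'' is false as stated for closed operators in general. What saves it is the paper's Theorem \ref{thsp3} argument: the diagonal operator with real eigenvalues on an orthonormal basis is already self-adjoint, and self-adjoint operators are maximally symmetric, so any self-adjoint $H_j$ satisfying $H_je_\lambda=\lambda_je_\lambda$ (forced, as you correctly note, by $H_j=H_j^*\subseteq\bigl(\frac{1}{2\pi i}\frac{\partial}{\partial x_j}\bigr)^*$) must coincide with the multiplication model. Your identification of joint eigenvectors as $c\,e_\lambda$ via connectedness, and the orthogonality of distinct exponentials, are also correct.

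The genuine gap is the one you flag yourself: completeness, i.e., that the joint spectral measure of a commuting family $(H_1,\dots,H_d)$ on a connected $\Omega$ of finite measure is purely atomic, so that the orthogonal exponentials actually span $L^2(\Omega)$. Your paragraph about local translations, boundedness, and ``recirculating mass'' is a heuristic, not an argument---nothing in it excludes continuous joint spectrum---and this step is precisely the content of Fuglede's theorem rather than a lemma on the way to it. Note that in dimension one the paper \emph{does} prove discreteness (Theorem \ref{thsp1}), but by a mechanism unavailable to your sketch when $d>1$: self-adjoint extensions on a union of $n$ intervals are parametrized by a unitary boundary matrix $B$ (Theorem \ref{th2.2}), and the spectrum is the zero set of the entire function $\lambda\mapsto\det\left(I-E(\lambda\vec\beta)^{-1}BE(\lambda\vec\alpha)\right)$, hence discrete; no such finite-dimensional boundary parametrization exists for general domains in $\br^d$. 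So as written your converse direction is a correct reduction of the theorem to its hardest step, which you then cite---which is defensible for a result the paper also only cites, but it means the proposal does not constitute an independent proof. Your closing paragraph on merely-open $\Omega$ is right: the forward construction never uses connectedness, and the failure of the converse comes from joint eigenspaces of dimension greater than one on disconnected sets, exactly the phenomenon the paper's spectral-boundary-matrix condition (Definition \ref{deffu3.1}, Theorem \ref{thfu4}) is designed to exclude.
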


\begin{conjecture}\label{confug}{\bf The Fuglede Conjecture.}\cite{Fug74} A set $\Omega$ of finite Lebesgue measure is spectral if and only if it tiles $\br^d$ by translations.
\end{conjecture}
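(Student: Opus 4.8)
The displayed statement is a \emph{conjecture}, not a theorem, and in the form written it is \emph{false} for $d\geq 3$ (see the counterexamples of Tao et al.\ cited in the introduction), while it remains open for $d=1$. No unconditional proof of the full biconditional is therefore available. What one can realistically organize is (i) a proof in the lattice case in all dimensions, (ii) a strategy for the equivalence in $d=1$ built on the operator framework of this paper, and (iii) an honest account of where the general correspondence collapses. Throughout I would use Theorem \ref{thfug} as the bridge: for open connected $\Omega$, being spectral is equivalent to the existence of a commuting family of self-adjoint extensions $(H_1,\dots,H_d)$ of the coordinate momentum operators, with the spectrum $\Lambda$ recovered from \eqref{eqjp2.6}.

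For the direction tiling $\Rightarrow$ spectral, suppose $\Omega$ tiles $\br^d$ by a set $\Gamma$. The plan is to produce a candidate spectrum $\Lambda$ from $\Gamma$ and then realize $\Lambda$ as a joint spectrum of commuting extensions via the ansatz \eqref{eqjp2.7}. When $\Gamma$ is a lattice this is classical: take $\Lambda$ to be the dual lattice, verify orthogonality $\langle e_\lambda,e_{\lambda'}\rangle_{L^2(\Omega)}=\widehat{\chi_\Omega}(\lambda'-\lambda)=0$ for $\lambda\neq\lambda'$ (the tiling forces $\widehat{\chi_\Omega}$ to vanish on the dual lattice minus the origin, by Poisson summation), and get completeness from the Fourier-analytic identity $\sum_{\lambda\in\Lambda}|\widehat{\chi_\Omega}(\xi-\lambda)|^2\equiv|\Omega|^2$, itself a tiling identity. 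The obstruction already appears here: tilings need not be by a lattice, and for merely periodic (or genuinely aperiodic) tilings there is no canonical dual object to serve as $\Lambda$.

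For the direction spectral $\Rightarrow$ tiling one is handed $\Lambda$, equivalently a commuting family of self-adjoint extensions, and must manufacture a translation set $\Gamma$ tiling $\br^d$. In dimension one this is exactly where the machinery of the later sections should be used: a single self-adjoint extension $A$ of $\Ds$ on $L^2(\Omega)$ has spectrum an arithmetic progression governed by the boundary matching encoded in the extension, and the unitary group $U(t)=\exp(2\pi itA)$ acts as translation inside the intervals of $\Omega$ while permuting the endpoints. I would try to read the tiling complement $\Gamma$ directly off this endpoint permutation/matching data, converting the local-translation behavior of $U(t)$ into a global periodic tiling of $\br$.

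The main obstacle is precisely this spectral $\Rightarrow$ tiling step once lattice structure is removed, and it is genuinely fatal in high dimensions: Tao's counterexample constructs a spectral subset of $\br^3$ (via a finite abelian group / complex Hadamard matrix construction) that tiles nothing. So any correct argument can only be conditional or dimension-restricted, and the honest deliverable is the lattice case in all $d$, the reduction of the full $d=1$ equivalence to a concrete combinatorial question about how the endpoint matching of the self-adjoint extension forces a periodic complement, and the explicit acknowledgment that the correspondence of Theorem \ref{thfug} links $\Lambda$ to extensions but says nothing about tiling — that extra geometric input is exactly what the conjecture asserts and what the high-dimensional counterexamples refute.
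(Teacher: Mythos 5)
Your opening caveat is exactly the right call, and it matches the paper: the statement you were handed is Conjecture \ref{confug}, stated purely as motivation, and the paper contains no proof of it --- nor could it, since the biconditional is known to fail for $d\geq 3$ and remains open for $d=1$, just as the introduction says. The paper's actual program is the one you identify in your second half: Theorem \ref{thfug} converts spectrality into the existence of commuting self-adjoint extensions, and Sections 3--5 (Theorems \ref{thsp1}, \ref{thfu1}, \ref{thfu4}, \ref{thfu5}) characterize spectrality of a union of intervals via spectral boundary matrices and unitary groups of local translations, deliberately stopping short of any tiling assertion. Your remark that the extension/spectrum correspondence ``says nothing about tiling'' is precisely the gap the authors leave open, and your sketch of the lattice case (orthogonality from the vanishing of $\widehat{\chi_\Omega}$ on the dual lattice via Poisson summation, completeness from $\sum_{\lambda\in\Lambda}|\widehat{\chi_\Omega}(\xi-\lambda)|^2\equiv|\Omega|^2$) is the standard, correct partial result going back to Fuglede's original paper.

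Two factual corrections, neither of which changes your verdict. First, Tao's original spectral-but-not-tiling example \cite{Tao04} lives in $\br^5$; the reduction to $\br^3$, and the converse direction (tiling but not spectral) in low dimensions, are due to the subsequent work of Kolountzakis, Matolcsi, Farkas, R\'ev\'esz and others --- the paper's introduction cites this cluster of papers collectively for ``dimension 3 and higher,'' so attribute the $\br^3$ example accordingly. Second, in your $d=1$ strategy you assert that a self-adjoint extension $A$ of $\Ds$ has spectrum ``an arithmetic progression governed by the boundary matching.'' By Theorem \ref{thsp1} the spectrum is the zero set of $\lambda\mapsto\det\bigl(I-E(\lambda\vec\beta)^{-1}BE(\lambda\vec\alpha)\bigr)$, which is discrete and unbounded but is an arithmetic progression only in special cases (e.g.\ a single interval, $n=1$); for a general union of $n$ intervals this is an almost-periodic determinant and the eigenvalues need not be periodic --- indeed, whether the boundary matrix $B$ forces enough rigidity to produce a periodic tiling complement is exactly the open combinatorial core of the $d=1$ problem, not something to be assumed. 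With those repairs, your proposal is an accurate account of the status of the conjecture, and there is no proof in the paper against which to compare it.
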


\begin{remark}\label{remf2}
 We note the following conclusions from Theorem \ref{thfug}: The link between the theorem and Conjecture \ref{confug} is as follows: when commuting self-adjoint extensions exist, then automatically the joint spectrum is purely discrete, and the corresponding eigenspaces will be one-dimensional. And they are necessarily orthogonal in $L^2(\Omega)$ and spanned by Fourier frequencies. The link to geometry is on account of the fact that, when commuting self-adjoint extensions exist for a given $\Omega$, then they generate a unitary representation $U$ of $\br^d$, with $U$ acting on $L^2(\Omega)$. But locally (i.e., in the interior of $\Omega$), $U$ will then act by translations.
\end{remark}

\section{Symmetric and self-adjoint extensions}

In this section we investigate the symmetric and the self-adjoint extensions of the momentum operator $\Ds$ on $C_c^\infty(\Omega)$, where $\Omega$ is a union of intervals as in Definition \ref{defi2.1}.
 
\begin{theorem}\label{th1.1}
The operator $\Ds$ is symmetric. The adjoint $\Ds^*$ has domain $\Dmax$ as in \eqref{eq1.1.1} and 
\begin{equation}
\Ds^* f=\frac{1}{2\pi i}f',\mbox{ for }f\in\D(\Ds^*)=\Dmax.
\label{eq1.1.2}
\end{equation}
The operator $\Ds$ on $C_c^\infty(\Omega)$ has a closed extension to $\mathscr D_0(\Omega)$ (see \eqref{eqd_0}), and, for $f$ in $\mathscr D_0(\Omega)$, we also have $\Ds f=\frac{1}{2\pi i}f'$. The adjoint of the operator $\Ds|_{\mathscr D_0(\Omega)}$ is the same as the one described above. The adjoint of $\Ds^*$ is 
\begin{equation}
\left(\Ds|_{C_c^\infty(\Omega)}\right)^{**}=\Ds|_{\mathscr D_0(\Omega)}=\Ds^*|_{\mathscr D_0(\Omega)}.
\label{eq1.1.3}
\end{equation}
\end{theorem}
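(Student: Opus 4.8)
The plan is to prove the four assertions in sequence, the common thread being integration by parts on each subinterval $J_i$ together with careful bookkeeping of the boundary sum $\int_{\partial\Omega}$. For symmetry, I would take $f,g\in C_c^\infty(\Omega)$ and integrate $\ip{\Ds f}{g}=\frac{1}{2\pi i}\int f'\cj g$ by parts on each $J_i$; since $f$ and $g$ have compact support in $\Omega$, every boundary contribution vanishes, and the conjugate of $\frac{1}{2\pi i}$ supplies exactly the sign needed to land on $\ip{f}{\Ds g}$. To compute $\Ds^*$, I would first verify $\Dmax\subseteq\D(\Ds^*)$: for $g\in\Dmax$ and $f\in C_c^\infty(\Omega)$ the boundary terms again disappear (because $f$ has compact support), giving $\ip{\Ds f}{g}=\ip{f}{\frac{1}{2\pi i}g'}$, which is an $L^2$-bounded functional of $f$. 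For the reverse inclusion, a vector $g\in\D(\Ds^*)$ satisfies $\ip{\Ds f}{g}=\ip{f}{\Ds^*g}$ for all $f\in C_c^\infty(\Omega)$; read on each interval, this says precisely that the distributional derivative of $g$ equals $2\pi i\,\Ds^*g\in L^2(\Omega)$, and the standard one-dimensional regularity fact (a distributional derivative in $L^2$ forces absolute continuity with that derivative) places $g$ in $\Dmax$. This establishes \eqref{eq1.1.2}.

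The core of the theorem is the identification of $(\Ds^*)^*$, which coincides with $\Ds^{**}=\cj{\Ds}$ since a densely defined symmetric operator is closable. Here I would integrate by parts for \emph{both} $f,g\in\Dmax$, where the boundary sum no longer vanishes, obtaining
\[
\ip{\Ds^* f}{g}=\frac{1}{2\pi i}\int_{\partial\Omega}f\cj g+\ip{f}{\tfrac{1}{2\pi i}g'}.
\]
If $g\in\mathscr D_0(\Omega)$ the boundary term is zero, so $g\in\D((\Ds^*)^*)$ with $(\Ds^*)^*g=\frac{1}{2\pi i}g'$. Conversely, if $g\in\D((\Ds^*)^*)$ then $g\in\D(\cj{\Ds})\subseteq\Dmax$, and the displayed identity forces $f\mapsto\int_{\partial\Omega}f\cj g$ to be $L^2$-bounded on $\Dmax$. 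Since the endpoint values can be prescribed freely for functions in $\Dmax$, I would isolate each one by inserting test functions $f_n\in\Dmax$ that carry a single chosen endpoint value equal to $1$ (all other endpoint values $0$) while $\|f_n\|_{L^2}\to0$ — concretely, increasingly narrow triangular bumps concentrated at that endpoint — to conclude $g(\alpha_i)=g(\beta_i)=0$, i.e. $g\in\mathscr D_0(\Omega)$. This yields $(\Ds^*)^*=\Ds|_{\mathscr D_0(\Omega)}$, and since every adjoint is closed, $\Ds|_{\mathscr D_0(\Omega)}$ is automatically closed and equals the closure of $\Ds|_{C_c^\infty(\Omega)}$, giving \eqref{eq1.1.3}.

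The remaining two small points I would dispatch directly. The equality $\Ds|_{\mathscr D_0(\Omega)}=\Ds^*|_{\mathscr D_0(\Omega)}$ is immediate, because $\mathscr D_0(\Omega)\subseteq\Dmax=\D(\Ds^*)$ and both operators act as $\frac{1}{2\pi i}\frac{d}{dx}$. That the adjoint of $\Ds|_{\mathscr D_0(\Omega)}$ equals the same $\Ds^*$ follows from the inclusions $C_c^\infty(\Omega)\subseteq\mathscr D_0(\Omega)\subseteq\Dmax$: taking adjoints reverses the first inclusion to give $(\Ds|_{\mathscr D_0(\Omega)})^*\subseteq\Ds^*$, while for $g\in\Dmax$ and $f\in\mathscr D_0(\Omega)$ the boundary term in the integration-by-parts formula vanishes (now because $f$ vanishes at the endpoints), showing $\Ds^*\subseteq(\Ds|_{\mathscr D_0(\Omega)})^*$. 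I expect the main obstacle to be precisely the boundary-term analysis in the second paragraph: turning $L^2$-boundedness of the boundary functional into the vanishing of each individual endpoint value, which is exactly the mechanism by which $\mathscr D_0(\Omega)$ (rather than all of $\Dmax$) is singled out as the closure of the minimal operator.
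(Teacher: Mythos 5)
Your proposal is correct, and while it follows the same integration-by-parts skeleton as the paper, it differs in two substantive steps, so a comparison is worthwhile. First, for the inclusion $\D(\Ds^*)\subseteq\Dmax$ you cite the standard one-dimensional regularity fact (a distributional derivative in $L^2$ forces absolute continuity with that derivative); the paper instead proves this from scratch: it computes the orthogonal complement of the range of $\Ds|_{C_c^\infty(\Omega)}$ (precisely the functions a.e.\ constant on each $J_i$) and concludes that $\varphi-\frac{1}{2\pi i}g$ is piecewise constant, where $\varphi$ is the antiderivative of $\Ds^*g$ --- your citation is legitimate and shorter, the paper's computation is self-contained. Second, for \eqref{eq1.1.3} the paper shows by a direct sequential argument that $\Ds|_{\D_0(\Omega)}$ is closed, checks that its adjoint is again $\Ds^*$ on $\Dmax$, and then invokes $A^{**}=A$ for closed densely defined operators; you instead compute $(\Ds^*)^*$ head-on, deducing $L^2$-boundedness of the boundary functional $f\mapsto\int_{\partial\Omega}f\cj g$ on $\Dmax$ and killing each endpoint value of $g$ with shrinking triangular bumps, after which closedness of $\Ds|_{\D_0(\Omega)}$ comes for free because adjoints are closed. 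Your route is sound, with two small remarks: the assertion $\D(\cj{\Ds})\subseteq\Dmax$ deserves its one-line justification $\cj{\Ds}=\Ds^{**}\subseteq\Ds^*$ (symmetry of $\Ds$ plus closedness of the adjoint), or more directly, testing $g\in\D((\Ds^*)^*)$ against $f\in C_c^\infty(\Omega)$ already gives $g\in\D(\Ds^*)=\Dmax$ and $(\Ds^*)^*g=\frac{1}{2\pi i}g'$; and once that is observed, your displayed identity yields $\int_{\partial\Omega}f\cj g=0$ for \emph{every} $f\in\Dmax$, so a single fixed piecewise-linear $f$ per endpoint suffices and the limiting bumps, while correct, are a luxury. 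In trade, your approach concentrates all the work in one adjoint computation and never needs the paper's hand-proved closedness of the minimal operator, whereas the paper's approach stays entirely elementary and avoids boundary-functional estimates; your handling of the remaining identities, via the sandwich $C_c^\infty(\Omega)\subseteq\D_0(\Omega)\subseteq\Dmax$ and the vanishing of boundary terms for $f\in\D_0(\Omega)$, matches the paper's.
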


\begin{proof}
Let $g\in \D(\Ds^*)$. By definition, this means that, for all $f\in C_c^\infty(\Omega)$, $\ip{\Ds f}{g}=\ip{f}{\Ds^*g}$ which means that 
$$\frac1{2\pi i}\int_\Omega f'(x)\cj g(x)\,dx=\int_\Omega f(x)\cj{\Ds^* g}(x)\,dx.$$

Define the function $\varphi(x):=\int_{\alpha_i}^x\Ds^* g(t)\,dt$, for all $x\in J_i$, $i\in\{1,\dots,n\}$. Then $\varphi$ is absolutely continuous and $\varphi'(x)=\Ds^*g(x)$ for almost every $x\in\Omega$. Then, using integration by parts, and the fact that $f|_{\partial\Omega}=0$, we have:
$$\frac{1}{2\pi i}\int_\Omega f'(x)\cj g(x)\,dx=\int_\Omega f(x)\cj{\varphi'(x)}\,dx=\int_{\partial\Omega}f\cj\varphi-\int_\Omega f'(x)\cj{\varphi(x)}\,dx=-\int_\Omega f'(x)\cj{\varphi(x)}\,dx.$$
Then
$$\int_{\Omega} f'(x)\cj{\left(\frac{1}{2\pi i}g(x)-\varphi(x)\right)}\,dx=0,\mbox{ for all }f\in C_c^\infty(\Omega).$$
This means that the function $\varphi-\frac{1}{2\pi i}g$ is orthogonal to the range of the operator $\Ds$. 

Next, we compute the orthogonal complement of the range of the operator $\Ds$. Note that, if $f\in C_c^\infty(\Omega)$, then $f|_{\partial\Omega}=0$, and $f(x)=\int_{\alpha_i}^x f'(t)\,dt$ for all $x\in J_i$, so $\int_{\alpha_i}^{\beta_i}f'(t)\,dt=f(\beta_i)=0$. This implies that, for every function $h\in L^2(\Omega)$ which is constant on each interval $J_i$, we have $\int_\Omega\Ds f(x)\cj h(x)\,dx=0$, so $h$ is 
orthogonal to the range of $\Ds$.

Conversely, let $h\in L^2(\Omega)$ be orthogonal to the range of $\Ds$. Fix $i\in\{1,\dots,n\}$. We will show that $h$ has to be constant on $J_i$.

First, we show that the range of $\Ds$ contains the functions $f\in C_c^\infty(J_i)$ with $\int_{\alpha_i}^{\beta_i}f(t)\,dt=0$. Let $f$ be such a function and let $\psi(x)=2\pi i\int_{\alpha_i}^xf(t)\,dt$, for $x\in J_i$, and $\psi(x)=0$ otherwise. Then, $\psi\in C_c^\infty(\Omega)$ and $\Ds\varphi=f$, thus $f$ is in the range of $\Ds$. 

Take now a function $f\in L^2(\Omega)$, which is zero outside $J_i$ and with $\int_{\alpha_i}^{\beta_i}f(t)\,dt=0$. One can approximate $f$ in $L^2$ by a sequence of functions $f_n$ in $C_c^\infty(J_i)$ with $\int_{\alpha_i}^{\beta_i}f(t)\,dt=0$, therefore the functions $f_n$ are in the range of the operator $\Ds$. It follows that $h$ is orthogonal to the functions $f_n$, hence to $f$. 

Now take a function $\tilde f\in L^2(\Omega)$ which is zero outside $J_i$. Define $f(x)=\tilde f(x)-\frac{1}{\beta_i-\alpha_i}\int_{\alpha_i}^{\beta_i}\tilde f(t)\,dt$, for $x\in J_i$, and $f(x)=0$ outside $J_i$. Then $\int_{\alpha_i}^{\beta_i}f(t)\,dt=0$. Then $\int_\Omega h(x)\cj f(x)\,dx=0$, which means that 
$$\int_\Omega \left(h(x)-\frac1{\beta_i-\alpha_i}\int_{\alpha_i}^{\beta_i}h(t)\,dt\right)\cj{\tilde f(x)}\,dx=\int_{\alpha_i}^{\beta_i}h(x)\cj{\tilde f(x)}\,dx-\frac{1}{\beta_i-\alpha_i}\int_{\alpha_i}^{\beta_i}h(x)\,dx\cdot\int_{\alpha_i}^{\beta_i}\cj{\tilde f(x)}\,dx$$
$$=\int_\Omega h(x)\cdot \cj{\left(\tilde f(x)-\frac1{\beta_i-\alpha_i}\int_{\alpha_i}^{\beta_i}\tilde f(t)\,dt\right)}\,dx=\int_\Omega h(x)\cj{f(x)}\,dx=0.$$

Since $\tilde f$ is arbitrary, it follows that the function $h(x)-\frac1{\beta_i-\alpha_i}\int_{\alpha_i}^{\beta_i}h(t)\,dt$ is zero a.e., on each interval $J_i$, which means that $h$ is constant a.e., on each interval $J_i$.

Returning to the computation of the domain of $\Ds^*$, we obtain that $\varphi-\frac{1}{2\pi i}g$ is constant on each interval $J_i$. But then since $\Ds^*g$ is in $L^2(\Omega)\subset L^1(\Omega)$, it follows that $\varphi(x)=\int_{\alpha_i}^x\Ds^*g\,dx$ is absolutely continuous, on each interval $J_i$, so $g$ is as well, and $\frac1{2\pi i }g'=\varphi'=\Ds^*g$ a.e. on $\Omega$. 

Conversely, if $g$ is absolutely continuous on each interval $J_i$, and $g'\in L^2(\Omega)$, then using integration by parts as above, we have 
$\ip{\Ds f}{g}=\ip{f}{\frac{1}{2\pi i}g'}$, and therefore $g\in \D(\Ds^*)$ and $\Ds^*g=\frac{1}{2\pi i}g'$.

Next, we prove that the operator $\Ds$ on $\D_0(\Omega)$ is closed. Take a sequence $\{f_n\}$ in $\D_0(\Omega)$ which converges in $L^2(\Omega)$ to some function $f$, and such that the sequence $\{\Ds f_n\}$ converges in $L^2(\Omega)$ to some other function $\frac{1}{2\pi i}g$. We want to prove that $f$ is in $\D_0(\Omega)$ and $f'=g$. 

Define $\varphi(x)=\int_{\alpha_i}^x g(t)\,dt=\ip{g}{\chi_{(\alpha_i,x)}}$, for all $x\in J_i$. Then, for $x\in \Omega$, $\varphi(x)$ is the limit of $\ip{f_n'}{\chi_{(\alpha_i,x)}}=\int_{\alpha_i}^xf_n'(t)\,dt=f_n(x)-f_n(\alpha_i)=f_n(x)$. Since $\{f_n\}$ converges to $f$ in $L^2(\Omega)$, we obtain that $f=\varphi$ a.e. This implies that $f$ is absolutely continuous, and $f'=\varphi'=g$ a.e. Since $\varphi(\alpha_i)=0$, it follows that $f(\alpha_i)=0$. Also,
$$f(\beta_i)=\varphi(\beta_i)=\int_{\alpha_i}^{\beta_i}f'(t)\,dt=\lim_n\int_{\alpha_i}^{\beta_i}f_n'(t)\,dt=\lim_n(f_n(\beta_i)-f_n(\alpha_i))=0.$$

To prove that the adjoint, of $\Ds|_{\D_0(\Omega)}$ is as before, the same arguments can be used. Since $A=\Ds|_{\mathscr D_0(\Omega)}$ is closed, $A^{**}=A$, see \cite[Corollary 1.8, page 305]{Con90}.
\end{proof}

\begin{theorem}\label{th2.2}
If $T$ is closed symmetric extension of $\Ds|_{C_c^\infty(\Omega)}$, then there exists a partial isometry $B$ between subspaces $B_l$ and $B_r$ of $\bc^n$ such that 
$$\D(T)=\left\{f\in \Dmax :  f(\vec \alpha)\in B_l, B f(\vec\alpha)= f(\vec \beta)\right\},$$
and $Tf=\Ds f$ for $f\in\D(T)$. Conversely, if the unbounded operator $T$ is defined as such, then it is a closed symmetric extension of $\Ds|_{C_c^\infty(\Omega)}$. 

The adjoint $T^*$ has domain 
$$\D(T^*)=\left\{f\in\Dmax : (B^*\vec f(\beta)-\vec f(\alpha))\perp B_l\right\},$$
and $T^*f=\Ds f$, for $f\in \D(T^*)$.

The operator $T$ is a self-adjoint extension if and only if $B$ is unitary, i.e., $B_l=B_r=\bc^n$.

\end{theorem}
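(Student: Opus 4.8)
The plan is to reduce everything to a boundary form on the maximal domain. By Theorem \ref{th1.1}, any closed symmetric operator $T$ with $\Ds|_{C_c^\infty(\Omega)} \subseteq T$ satisfies $\Ds|_{\mathscr D_0(\Omega)} \subseteq T \subseteq \Ds^*$, so $\mathscr D_0(\Omega) \subseteq \D(T) \subseteq \Dmax$ and $Tf = \Ds^* f = \frac{1}{2\pi i} f'$. For $f, g \in \Dmax$, integration by parts (as in the proof of Theorem \ref{th1.1}) produces the boundary form
$$\ip{\Ds^* f}{g} - \ip{f}{\Ds^* g} = \frac{1}{2\pi i}\int_{\partial\Omega} f\cj g = \frac{1}{2\pi i}\left(\ip{f(\vec\beta)}{g(\vec\beta)} - \ip{f(\vec\alpha)}{g(\vec\alpha)}\right),$$
where the right-hand inner products are in $\bc^n$. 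Thus $T$ is symmetric precisely when this form vanishes on $\D(T)$, i.e. when $\ip{f(\vec\beta)}{g(\vec\beta)} = \ip{f(\vec\alpha)}{g(\vec\alpha)}$ for all $f, g \in \D(T)$.

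First I would introduce the boundary map $\partial f = (f(\vec\alpha), f(\vec\beta)) \in \bc^n \oplus \bc^n$. Remark \ref{rem1.1} shows each endpoint value is controlled by $\|f\|_{L^2} + \|f'\|_{L^2}$, so $\partial$ is continuous for the graph norm of $\Ds^*$; moreover $\partial$ is surjective (one can build $\Dmax$-functions with any prescribed endpoint values) with kernel exactly $\mathscr D_0(\Omega)$. The crucial reduction is that, because $\mathscr D_0(\Omega) \subseteq \D(T)$, the domain is completely recovered from its boundary image $V := \partial(\D(T))$: if $f \in \Dmax$ with $\partial f \in V$, choose $h \in \D(T)$ with $\partial h = \partial f$, so that $f - h \in \ker\partial = \mathscr D_0(\Omega) \subseteq \D(T)$ and hence $f \in \D(T)$. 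Therefore $\D(T) = \partial^{-1}(V)$, and the symmetry condition says exactly that $V$ is isotropic for the form $\ip{x_2}{y_2} - \ip{x_1}{y_1}$ on $\bc^n\oplus\bc^n$.

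The next step is to identify isotropic subspaces with graphs of partial isometries. Writing $B_l$ for the projection of $V$ onto the first coordinate, isotropy applied to a difference $(0, x_2 - x_2')$ forces $\|x_2 - x_2'\|^2 = 0$, so each $x_1 \in B_l$ determines a unique second coordinate; this defines a linear map $B: B_l \to \bc^n$ with $\|Bx_1\| = \|x_1\|$, i.e. an isometry, and $B_r := B(B_l)$. Extending $B$ by zero on $B_l^\perp$ makes it a partial isometry, and $V$ is its graph, giving $\D(T) = \{f \in \Dmax : f(\vec\alpha)\in B_l,\ Bf(\vec\alpha) = f(\vec\beta)\}$. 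Conversely, for any partial isometry $B$ this set contains $\mathscr D_0(\Omega)$, the form vanishes on it (by the isometry property and polarization), so the operator is symmetric; and it is closed because it is the preimage under the continuous map $\partial$ of the closed, finite-dimensional graph of $B$ inside the complete graph-norm space $\Dmax$ (recall $\Ds^*$ is closed).

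For the adjoint, since $\Ds|_{\mathscr D_0(\Omega)} \subseteq T$ we get $T^* \subseteq \Ds^*$, so $T^* g = \frac{1}{2\pi i} g'$ and $\D(T^*) \subseteq \Dmax$; then $g \in \D(T^*)$ iff the boundary form vanishes against every $f \in \D(T)$, which, substituting $f(\vec\beta) = Bf(\vec\alpha)$ with $f(\vec\alpha)$ ranging over $B_l$, becomes $\ip{x}{B^* g(\vec\beta) - g(\vec\alpha)} = 0$ for all $x \in B_l$, i.e. $B^* g(\vec\beta) - g(\vec\alpha) \perp B_l$. Finally, $T$ is self-adjoint iff $V = \partial(\D(T)) = \partial(\D(T^*)) =: V^*$. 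If $B$ is unitary then $B^* = B^{-1}$, $B_l = \bc^n$, and the condition defining $\D(T^*)$ collapses to $g(\vec\beta) = Bg(\vec\alpha)$, giving $\D(T^*) = \D(T)$. Conversely a dimension count — $\dim V = \dim B_l = k$ while $\dim V^* = 2n - k$ — forces $k = n$ when $V = V^*$, so $B_l = \bc^n$ and the isometry $B$ is unitary. I expect the main obstacle to be the reduction $\D(T) = \partial^{-1}(V)$: it is what upgrades the pointwise symmetry relation into a clean parametrization, and it relies essentially on the identification $\ker\partial = \mathscr D_0(\Omega)$ together with the fact that the minimal closed extension $\Ds|_{\mathscr D_0(\Omega)}$ sits inside every closed symmetric extension.
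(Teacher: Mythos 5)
Your proposal is correct, and its skeleton coincides with the paper's own proof: both use Theorem \ref{th1.1} to trap a closed symmetric extension between the minimal operator $\Ds|_{\mathscr D_0(\Omega)}$ (the closure of $\Ds|_{C_c^\infty(\Omega)}$, by \eqref{eq1.1.3}) and the maximal operator $\Ds^*$, derive the boundary form \eqref{eqparts} by integration by parts, define $B$ on $B_l=\{f(\vec\alpha):f\in\D(T)\}$ from the isometry forced by symmetry, and recover the full domain from boundary data by exactly your key reduction $\D(T)=\partial^{-1}(V)$ --- the paper's version being that if $f\in\D_B$ and $f_0\in\D(T)$ share boundary values then $f-f_0\in\D_0(\Omega)\subseteq\D(T)$. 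You diverge in two sub-steps, both defensibly. For closedness of the converse extension, the paper runs a direct sequential argument showing the limit function lies in $\Dmax$ with $f'=g$, leaving implicit the check that the boundary conditions $f(\vec\alpha)\in B_l$, $Bf(\vec\alpha)=f(\vec\beta)$ survive the limit; your formulation --- $\D_B$ is the preimage under the graph-norm-continuous map $\partial$ of the closed subspace $V\subset\bc^n\oplus\bc^n$, inside the graph-norm-complete space $\Dmax$ --- handles that point automatically and is the cleaner mechanism. For the converse of the self-adjointness criterion, the paper argues by contradiction, constructing an explicit $g\in\Dmax$ with $g(\vec\alpha)=0$ and $g(\vec\beta)=w\perp B_r$, so that $g\in\D(T^*)\setminus\D(T)$; your dimension count, $\dim V=k$ versus $\dim V^*=2n-k$, combined with $V\subseteq V^*$ (which holds since $T\subseteq T^*$ for symmetric $T$), forces $k=n$ and hence $B$ unitary, and is a valid linear-algebra substitute. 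Its one hidden prerequisite is the surjectivity of $\partial:\Dmax\rightarrow\bc^n\oplus\bc^n$ (so that $V^*$ is the \emph{full} solution set of the orthogonality condition, of codimension exactly $k$), which you stated explicitly and which is met by piecewise linear functions, just as the paper itself uses in its construction of $g$.
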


\begin{proof}

Let $T$ be a closed symmetric extension of $\Ds$. Then, for all $f\in C_c^\infty(\Omega)$, and $g\in\D(T)$, we have 
$$\ip{f}{Tg}=\ip{Tf}{g}=\ip{\Ds f}{g}.$$
By Theorem \ref{th1.1}, this implies that $g\in\Dmax$ and $Tg=\Ds^* g=\Ds g$. Thus $\D(T)\subseteq\Dmax$ and $Tg=\Ds g$, for all $g\in\D(T)$, in other words, $\Ds$ on $\Dmax$ is an extension of $T$. 

Using integration by parts we have, for all $f,g\in\Dmax$:
$$\ip{f}{\Ds g}=\int_\Omega f\cj{\frac{1}{2\pi i}g'}=-\frac{1}{2\pi i}\int_\Omega f\cj{g'}=-\frac{1}{2\pi i}\left(-\int_\Omega f'\cj g+\sum_{i=1}^n\left(f(\beta_i)\cj g(\beta_i)-f(\alpha_i)\cj g(\alpha_i)\right) \right)
$$
$$=\frac{1}{2\pi i}\int_\Omega f' g-\frac1{2\pi i}\left(\ip{ f(\vec\beta)}{ g(\vec\beta)}-\ip{ f(\vec\alpha)}{ g(\vec\alpha)}\right),
$$
(see the notation in Remark \ref{rem1.1}).
Thus, we have 

\begin{equation}
\ip{f}{\Ds g}=\ip{\Ds f}{g}-\frac1{2\pi i}\left(\ip{ f(\vec\beta)}{ g(\vec\beta)}-\ip{ f(\vec\alpha)}{ g(\vec\alpha)}\right),\mbox{ for all }f,g\in\Dmax.
\label{eqparts}
\end{equation}

Now, if $T$ is a closed symmetric extension of $\Ds$ on $C_c^\infty(\Omega)$, then, for $f,g\in \D(T)$, we have, $f,g\in\Dmax$ and 
$$0=\ip{f}{T g}-\ip{Tf}{g}=\ip{f}{\Ds g}-\ip{\Ds f}{g}=-\frac1{2\pi i}\left(\ip{ f(\vec\beta)}{ g(\vec\beta)}-\ip{f(\vec\alpha)}{g(\vec\alpha)}\right),$$
therefore 
\begin{equation}
\ip{f(\vec\alpha)}{ g(\vec\alpha)}=\ip{ f(\vec\beta)}{ g(\vec\beta)},\mbox{ for all }f,g\in\D(T).
\label{eq2.1.1}
\end{equation}
Taking $f=g$ in \eqref{eq2.1.1}, we get that $\| f(\vec\alpha)\|^2=\| f(\vec\beta)\|^2$ for all $f\in \D(T)$; in particular, if $ f(\vec\alpha)=0$, then $ f(\vec\beta)=0$. This implies that, if $f_1,f_2\in\D(T)$ and $f_1(\vec\alpha)= f_2(\vec\alpha)$ then $(f_1-f_2)(\vec\alpha)=0$ so $(f_1-f_2)(\vec\beta)=0$, and $ f_1(\vec\beta)= f_2(\vec\beta)$. This means that there exists a well defined function $B$, 
$$B( f(\vec\alpha))= f(\vec\beta),\mbox{ for all }f\in\D(T),\quad B:B_l\rightarrow B_r,$$
where 
$$B_l:=\{ f(\vec\alpha) : f\in\D(T)\},\quad B_r:=\{ f(\vec\beta) : f\in\D(T)\}.$$

In addition $B$ is a linear isometry between the subspaces $B_l$ and $B_r$ of $\bc^n$, and, by definition, $ f(\vec\beta)=B  f(\vec\alpha)$, for $f\in\D(T)$. We can define $B$ to be zero on the orthogonal complement of $B_l$.

Because of this, we obtain that $\D(T)$ is contained in 
$$\D_B:=\left\{f\in\Dmax : f(\vec\alpha)\in B_l, f(\vec\beta)\in B_r, B f(\vec\alpha)= f(\vec\beta)\right\}.$$

We prove that the reverse inclusion also holds. Let $f\in\D_B$. Then $ f(\vec\alpha)\in B_l$. Then, there exists $f_0\in \D(T)$ such that $ f_0(\vec\alpha)=f(\vec\alpha)$. Then also $ f_0(\vec\beta)=B f_0(\vec\alpha)=B f(\vec\alpha)=f(\vec\beta)$. Hence $(f-f_0)(\vec\alpha)=(f-f_0)(\vec\beta)=0$, and so $f-f_0\in\D_0(\Omega)\subseteq\D(T)$. Then $f=(f-f_0)+f_0\in\D(T)$.

Assume now, conversely, that we are given an partial isometry $B$ from $B_l$ to $B_r$, and we prove that $\Ds$ on $\D_B$ is symmetric and closed. 

For symmetry, we use \eqref{eqparts}; for $f,g\in\D(T)$, we have 
$$\ip{f}{\Ds g}=\ip{\Ds f}{g}-\frac1{2\pi i}\left(\ip{ f(\vec\beta)}{g(\vec\beta)}-\ip{ f(\vec\alpha)}{ g(\vec\alpha)}\right)$$$$=\ip{\Ds f}{g}-\frac{1}{2\pi i}\left(\ip{B f(\vec\alpha)}{B g(\vec\alpha)}-\ip{ f(\vec\alpha)}{ g(\vec\alpha)}\right)=\ip{\Ds f}{g}.$$ 

To see that the operator is closed, take $\{f_n\}$ in $\D(T)$ convergent to $f$ in $L^2(\Omega)$, and $\{f_n'\}$ convergent to $g$ in $L^2(\Omega)$. For $i\in\{1,\dots,n\}$ and $x\in J_i$, we have 
$$f_n(x)=f_n(\alpha_i)+\int_{\alpha_i}^x f_n'(t)\,dt.$$
On the left hand side $\{f_n\}$ converges to $f$ in $L^2(\Omega)$; on the right hand side $\int_{\alpha_i}^x f_n'(t)\,dt$ converges to $\int_{\alpha_i}^x g(t)\,dt$ for all $x\in J_i$. Therefore we obtain that $\{f_n(\alpha_i)\}$ converges to $c_i=f(x)-\int_{\alpha_i}^x g(t)\,dt$, for a.e. $x$. But then 
$f(x)=c_i+\int_{\alpha_i}^x g(t)\,dt$ for a.e. $x$ and therefore $f'=g$ a.e., so the operator is closed.

Consider now the extension $T=\Ds$ on $\D_B$. We will compute its adjoint. For $g\in\D(T^*)$, we have that $\D_B\ni f\mapsto \ip{\Ds f}{g}$ is bounded. In particular, it is bounded on $\D_0(\Omega)$ so $g\in\Dmax$ and $T^* g=\Ds g$, $\ip{\Ds f}{g}=\ip{f}{T^*g}=\ip{f}{\Ds g}$. Then, with integration by parts \eqref{eqparts}, we have that 
$$\ip{ f(\vec\alpha)}{B^* g(\vec\beta)}=\ip{B f(\vec\alpha)}{ g(\vec\beta)}=\ip{ f(\vec\beta)}{ g(\vec\beta)}=\ip{ f(\vec\alpha)}{ g(\vec\alpha)},\mbox{ for all }f\in\D_B.$$

This implies that $\left( B^* g(\vec\beta)- g(\vec\alpha)\right)$ is orthogonal to the subspace $B_l$.

Conversely, if $g\in\Dmax$ and $\left( B^* g(\vec\beta)- g(\vec\alpha)\right)\perp B_l$, then from the previous computation, and using integration by parts \eqref{eqparts}, we get that, for $f\in \D_B$, 
$$|\ip{\Ds f}{g}|=|\ip{f}{\Ds g}|\leq \|f\|\|\Ds g\|,$$ and so the linear map $\D_B\ni f\mapsto \ip{\Ds f}{g}$ is bounded and $g\in\D(T^*)$.

Now, let's consider the case when the extension $T$ is self-adjoint. In this case $\D(T^*)$ is contained in $\D(T)$, and therefore, if $g\in\Dmax$ with $B^* g(\vec\beta)- g(\vec\alpha)$ orthogonal to $B_l$, then we have that $B g(\vec\alpha)= g(\vec\beta)$. We will prove that $B_l$ must be the entire space $\bc^n$. Suppose there exists a non-zero vector $v$ orthogonal to $B_l$. Then, since $B$ is a partial isometry, there exists a non-zero vector $w$ orthogonal to $B_r$. Let $g\in\Dmax$ such that $ g(\vec\alpha)=0$ and $ g(\vec\beta)=w$ (for example, make $g$ piecewise linear on the intervals $J_i$). Then $B^*g(\vec\beta)-\vec g(\alpha)=0-0=0\perp B_l$. Thus $g\in\D(T^*)=\D(T)$, and therefore $B g(\vec\alpha)= g(\vec\beta)$, which implies that $B(0)=w$, a contradiction. Thus, when the extension is self-adjoint, we get that $B_r=\bc^n$ and $B_l=\bc^n$ and $B$ is unitary.

For the converse, if $B$ is unitary and $B_l=\bc^n$, $B_r=\bc^n$, then, $g\in\D(T^*)$, if and only if $B^* g(\vec\beta)- g(\vec\alpha)$ is orthogonal to $B_l$ so it must be zero, i.e., $B^* g(\vec\beta)=\ g(\vec\alpha)$, which is equivalent to $B g(\vec\alpha)= g(\vec\beta)$. This means that $\D(T^*)=\D(T)=\D_B$.
\end{proof}

\begin{definition}\label{defboma}
For a self-adjoint extension $A$ of the operator $\Ds|_{C_c^\infty(\Omega)}$ as in Theorem \ref{th2.2}, we call the unitary matrix $B$, the {\it boundary matrix associated to $A$}.

\end{definition}

\section{Spectral decomposition}

Having a self-adjoint extension $A$ of the momentum operator $\Ds$, we can use the Spectral Theorem \ref{thspectral} to obtain a spectral resolution of the self-adjoint operator $A$. We present in this section an explicit description of this spectral resolution. 

\begin{definition}\label{defsp1}
For $\vec z=(z_1,z_2,\dots,z_n)\in\bc^n$ denote by $E(\vec z)$, the $n\times n$ diagonal matrix with entries $(e^{2\pi iz_1},e^{2\pi iz_2},\dots,e^{2\pi iz_n})$.
\end{definition}

\begin{theorem}\label{thsp1}
Let $A$ be a self-adjoint extension of the operator $\Ds|_{C_c^\infty(\Omega)}$ and let $B$ its unitary boundary matrix. Let $P$ be the spectral measure for the operator $A$, so 
$$A=\int_{\br}t\,dP(t)$$
Then the spectral measure is atomic, supported on the spectrum  
$$\sigma(A)=\left\{\lambda\in\bc : \det(I-E(\lambda\vec\beta)^{-1}BE(\lambda\vec\alpha))=0\right\}\subseteq\br$$
which is a discrete unbounded set. For $\lambda\in\sigma(A)$, the eigenspace $P(\{\lambda\})L^2(\Omega)$ has dimension at most $n$, and it consists of functions of the form 
$$f(x)= e^{2\pi  i \lambda x} \sum_{i=1}^n c_i\chi_{J_i}(x),\mbox{ where }c=(c_i)_{i=1}^n\in\bc^n\mbox{, and }BE(\lambda\vec\alpha)c=E(\lambda\vec\beta)c.$$
\end{theorem}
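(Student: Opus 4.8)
The plan is to split the argument into a concrete \emph{computation} of the eigenpairs, which is first-order ODE theory plus linear algebra, and a \emph{qualitative} part (atomicity, discreteness, unboundedness, reality of the spectrum), which I would deduce from the fact that $A$ has compact resolvent. First I would compute the eigenfunctions. If $Af=\lambda f$ with $f\neq 0$, then on each interval $J_i$ the equation reads $\frac{1}{2\pi i}f'=\lambda f$, whose solutions are $f(x)=c_i e^{2\pi i\lambda x}$; thus $f=e^{2\pi i\lambda x}\sum_{i=1}^n c_i\chi_{J_i}$ for some $c=(c_i)\in\bc^n$. Using the boundary values from Remark \ref{rem1.1} we get $f(\vec\alpha)=E(\lambda\vec\alpha)c$ and $f(\vec\beta)=E(\lambda\vec\beta)c$. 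Since $B$ is unitary, Theorem \ref{th2.2} says $\D(A)$ is cut out by the single condition $Bf(\vec\alpha)=f(\vec\beta)$, which here becomes $BE(\lambda\vec\alpha)c=E(\lambda\vec\beta)c$, equivalently $(I-E(\lambda\vec\beta)^{-1}BE(\lambda\vec\alpha))c=0$. A nonzero eigenfunction exists precisely when this homogeneous $n\times n$ system is singular, i.e. when $\det(I-E(\lambda\vec\beta)^{-1}BE(\lambda\vec\alpha))=0$; and because $c\mapsto e^{2\pi i\lambda x}\sum c_i\chi_{J_i}$ is injective (the $J_i$ have disjoint supports), the eigenspace is linearly isomorphic to the kernel of that matrix, hence of dimension at most $n$ and of the asserted form.

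Next I would show $A$ has compact resolvent. Every $f\in\D(A)\subseteq\Dmax$ lies in $H^{1}$ of the bounded set $\Omega$ (it is absolutely continuous on each $J_i$ with $f'\in L^2$), and $\norm{f'}^2=(2\pi)^2\norm{Af}^2$, so the graph norm of $A$ dominates the $H^{1}(\Omega)$-norm. For $\lambda\notin\br$ the operator $A-\lambda$ is boundedly invertible (self-adjointness), and if $g=(A-i)^{-1}h$ then $\norm{Ag}\le\norm{h}+\norm{g}\le C\norm{h}$, so $(A-i)^{-1}$ maps $L^2(\Omega)$ boundedly into $H^{1}(\Omega)$. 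Composing with the compact Rellich embedding $H^{1}(\Omega)\hookrightarrow L^2(\Omega)$ (a finite direct sum of the compact embeddings $H^{1}(J_i)\hookrightarrow L^2(J_i)$) shows $(A-i)^{-1}$ is compact. Consequently the spectrum of $A$ is purely discrete: the spectral measure $P$ is atomic, $\sigma(A)$ has no finite accumulation point, and every eigenspace is finite-dimensional. Since $L^2(\Omega)$ is infinite-dimensional while each eigenspace has dimension at most $n$, there must be infinitely many eigenvalues, so $\sigma(A)$ is unbounded.

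Finally I would assemble the two parts. Compactness of the resolvent gives $\sigma(A)=\sigma_p(A)$, and Step~1 identifies $\sigma_p(A)$ with the zero set of $\det(I-E(\lambda\vec\beta)^{-1}BE(\lambda\vec\alpha))$, yielding the stated description of $\sigma(A)$ together with the eigenspace. Self-adjointness forces every eigenvalue to be real, so this zero set is automatically contained in $\br$, which establishes the inclusion $\sigma(A)\subseteq\br$ asserted in the statement. I expect the compact-resolvent step to be the main obstacle, since it is what rules out any continuous spectrum and makes the spectral measure atomic; an alternative, slightly more computational, route would be to exhibit the resolvent $(A-\lambda)^{-1}$ directly as an integral operator with an explicit bounded Green's kernel on the bounded set $\Omega$, which is then Hilbert--Schmidt, simultaneously re-proving that each non-real $\lambda$ lies in $\rho(A)$.
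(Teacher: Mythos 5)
Your proposal is correct, and its computational half coincides with the paper's: the identification of eigenfunctions as $f=e^{2\pi i\lambda x}\sum_{i}c_i\chi_{J_i}$ subject to $BE(\lambda\vec\alpha)c=E(\lambda\vec\beta)c$, together with the dimension bound $n$, is exactly the equivalence of (iv) and (v) in the paper's Proposition \ref{prsp2}. Where you genuinely diverge is the qualitative part. The paper never uses compactness: it solves the inhomogeneous equation $\bigl(\tfrac{1}{2\pi i}\tfrac{d}{dx}-\lambda\bigr)f=g$ explicitly on each $J_i$ by an integrating factor, showing that surjectivity of $A-\lambda I$ is equivalent to surjectivity of the matrix $E(\lambda\vec\beta)^{-1}BE(\lambda\vec\alpha)-I$; combined with the injectivity equivalence and the finite-dimensional Fredholm alternative, this proves $\lambda\in\rho(A)$ iff $\det\bigl(I-E(\lambda\vec\beta)^{-1}BE(\lambda\vec\alpha)\bigr)\neq 0$ directly for \emph{every} $\lambda\in\bc$, and discreteness then follows because this determinant is an entire function of $\lambda$ (not identically zero, since non-real $\lambda$ lie in $\rho(A)$ by self-adjointness --- a point the paper leaves implicit and your route avoids entirely). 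Your compact-resolvent argument --- the graph norm of $A$ dominates the broken $H^1$ norm, $(A-i)^{-1}$ maps $L^2(\Omega)$ boundedly into $\bigoplus_i H^1(J_i)$, and the one-dimensional Rellich embedding is compact on bounded intervals --- is a standard and perfectly sound alternative; it delivers atomicity of $P$, discreteness, and finite multiplicity in one stroke without computing the resolvent, and it cleanly justifies $\sigma(A)=\sigma_p(A)$ before invoking your eigenvalue computation. What it gives up is the explicit Green's-function description of $(A-\lambda I)^{-1}$, which the paper's argument produces as a byproduct and which is what lets the paper characterize the resolvent set pointwise rather than deducing it from discreteness. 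Both proofs finish unboundedness the same way: eigenspaces of dimension at most $n$ must span the infinite-dimensional $L^2(\Omega)$, so there are infinitely many eigenvalues, and a discrete infinite subset of $\br$ is unbounded.
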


\begin{proof}

We begin with a proposition.

\begin{proposition}\label{prsp2}
Let $A$ be a self-adjoint extension as in Theorem \ref{thsp1}. Let $\lambda\in\bc$. The following statements are equivalent:
\begin{enumerate}
	\item $\lambda$ is in the resolvent set of $A$. 
	\item The operator $A-\lambda I$ is onto.
	\item The matrix $E(\lambda\vec\beta)^{-1}BE(\lambda\vec\alpha)-I$ is onto.
	\item The operator $A-\lambda I$ in one-to-one. 
	\item The matrix $E(\lambda\vec\beta)^{-1}BE(\lambda\vec\alpha)-I$ is one-to-one.
\end{enumerate}

\end{proposition}

\begin{proof}
We prove that (ii) and (iii) are equivalent. 

The operator $A-\lambda I$ is onto, means that for every $g\in L^2(\Omega)$, there exists $f\in \D_B=\D(A)$, such that 
$$\frac{1}{2\pi i}f'-\lambda f=g.$$
We solve this first order linear differential equation on each interval $J_i$ of $\Omega$. We have $f'-2\pi i \lambda f=2\pi ig$. Multiplying by the integrating factor $e^{-2\pi i\lambda x}$, we get 
$$\left(e^{-2\pi i\lambda t} f(t)\right)'=2\pi i g(t)e^{-2\pi i\lambda t}.$$
Integrating, we get the general solution 
$$f(x)=e^{2\pi i \lambda x}\left(2\pi i\int_{\alpha_i}^x g(t)e^{-2\pi i\lambda t}\,dt+c_i\right),$$
for some constant $c_i$, for all $x\in J_i$, and all $i\in\{1,\dots,n\}$.

Since $g$ is in $L^2(\Omega)$, we see that $f$ is absolutely continuous and $f'=2\pi i (\lambda f+g)$ is in $L^2(\Omega)$, which means that $f$ is in the domain $\Dmax$, and the only thing that we have to insure is that $B \vec f(\alpha)=\vec f(\beta)$, by picking the right constants $(c_i)$.

Let's see what the condition $B \vec f(\alpha)=\vec f(\beta)$ means. We have 
$$f(\alpha_i)=c_ie^{2\pi i\lambda \alpha_i},\quad f(\beta_i)=e^{2\pi i\lambda \beta_i}(A_i+c_i),$$
where $A_i=2\pi i\int_{\alpha_i}^{\beta_i} g(t)e^{-2\pi i\lambda t}\,dt$.

Let $\vec A=(A_i)_{i=1}^n$, $\vec c=(c_i)_{i=1}^n$. Note that, by varying $g$, any vector in $\bc^n$ can be obtained as $\vec A$. Then, the condition $ f(\vec\beta)=B f(\vec\alpha)$ is equivalent to 
$$ E(\lambda\vec\beta)(\vec A+\vec c)=BE(\lambda\vec\alpha)\vec c,$$
or, equivalently,
$$ \vec A=(E(\lambda\vec\beta)^{-1}BE(\lambda\vec\alpha)-I)\vec c.$$
This shows that, the operator $A-\lambda I$ is onto if and only if the matrix $E(\lambda\vec\beta)^{-1}BE(\lambda\vec\alpha)-I$ is onto.

Next, we prove that (iv) and (v) are equivalent. The operator $A-\lambda I$ is not one-to-one means that there exists a non-zero $f\in\D_B$ with $\vec f(\beta)=B\vec f(\alpha)$, such that $\frac{1}{2\pi i}f'=\lambda f$. Solving this differential equation on each interval $J_i$, we obtain that $f(x)=c_ie^{2\pi i\lambda x}$, for $x\in J_i$, for some constant $c_i$. Then, the relation $\vec f(\beta)=B\vec f(\alpha)$ implies that, $(c_i e^{2\pi i\lambda\beta_i})_{i=1}^n= B(c_i e^{2\pi i\lambda\alpha_i})_{i=1}^n$; with $\vec c:=(c_i)_{i=1}^n$, this can be rewritten as $(E(\lambda\vec\beta)^{-1}BE(\lambda\vec\alpha)-I)\vec c=0$. Thus the operator $A-\lambda I$ if and only if the matrix $E(\lambda\vec\beta)^{-1}BE(\lambda\vec\alpha)-I$ is not one-to-one.

Finally, the statements (iii) and (v) are equivalent because they refer to a square matrix in a finite dimensional space. 

Now, since $A$ is self-adjoint, so also closed, $\lambda\in\rho(A)$ if and only if $A-\lambda I$ is both one-to-one and onto; but these two properties are equivalent, so (i) is equivalent to all the other statements. 

\end{proof}

Returning to the proof of Theorem \ref{thsp1}, we see that $\lambda$ is in the spectrum of $A$ if and only if $\det(I-E(\lambda\vec\beta)^{-1}BE(\lambda\vec\alpha))=0$. This is an analytic function of $\lambda$, therefore the zero set is discrete and at most countable, unless the function is identically zero. Suppose by contradiction that it is so. Multiplying on the left by $\det(E(\lambda\beta))$ and on the right by $\det(E(\lambda\alpha)^{-1})$, we get that 
$$0=\det(E(\lambda\vec\beta)E(\lambda\vec\alpha)^{-1}-B)=\det(E(\lambda(\vec\beta-\vec\alpha))-B),
$$
for all $\lambda\in\bc$. Take $\lambda=it$, with $t>0$, and let $t\rightarrow\infty$ Then $0=\det(E(\lambda(\vec\beta-\vec\alpha))-B)$ converges to $\det(-B)$ which is not zero (since $B$ is unitary), and this is a contradiction. Thus the spectrum of $A$ is discrete and at most countable. 

From the proof of Proposition \ref{prsp2}, we see that the eigenspace $P(\{\lambda\})L^2(\Omega)$ is as in the statement of the theorem, and hence has dimension at most $n$. Since, by the Spectral Theorem the orthogonal sum of the eigenspaces spans the entire Hilbert space $L^2(\Omega)$, it follows that $\sigma(A)$ cannot be finite, and since it is discrete, it has to be unbounded. 

\end{proof}

\begin{theorem}\label{thsp3}
Let $A$ be as in Theorem \ref{thsp1} and let $\{\lambda_n: n\in\bz\}$ a list of the eigenvalues of $A$ repeated according to multiplicity. Let $\{\epsilon_n :n\in\bz\}$ be an orthonormal basis of eigenvectors for $A$, $A\epsilon_n=\lambda_n \epsilon_n$, for all $n\in\bz$. Then 
\begin{equation}
\D(A)=\left\{f=\sum_{n\in\bz}f_n\epsilon_n\in L^2(\Omega) : \sum_{n\in\bz}|\lambda_n|^2|f_n|^2<\infty\right\}.
\label{eqsp3.1}
\end{equation}

\end{theorem}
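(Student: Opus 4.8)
The plan is to combine the atomic spectral decomposition of $A$ furnished by Theorem \ref{thsp1} with the abstract description of domains of functions of a self-adjoint operator recorded in Remark \ref{remspec}. Write $f=\sum_{n\in\bz}f_n\epsilon_n$ with $f_n=\ip{f}{\epsilon_n}$, so that $\|f\|^2=\sum_n|f_n|^2$ by Parseval. Applying the domain identity \eqref{eqremspec2} with the identity function $\varphi(x)=x$, for which $\varphi(A)=A$, it suffices to show that the scalar measure $E_{f,f}$ of \eqref{eqremspec0} is the atomic measure $\sum_n|f_n|^2\,\delta_{\lambda_n}$: the stated condition $\sum_n|\lambda_n|^2|f_n|^2<\infty$ is then exactly $\int_\br|x|^2\,dE_{f,f}(x)<\infty$, i.e. $f\in\D(A)$.

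To make this precise I would first write the spectral measure $P$ explicitly in the eigenbasis. By Theorem \ref{thsp1}, $P$ is atomic, supported on the discrete set $\sigma(A)$, and the eigenspaces span $L^2(\Omega)$; listing eigenvalues with multiplicity and choosing the orthonormal eigenbasis $\{\epsilon_n\}$, the projection onto the eigenvectors with eigenvalue in $\Delta$ is $P(\Delta)f=\sum_{n:\lambda_n\in\Delta}f_n\epsilon_n$. Hence $E_{f,f}(\Delta)=\ip{P(\Delta)f}{f}=\sum_{n:\lambda_n\in\Delta}|f_n|^2$, which identifies $E_{f,f}$ as the claimed atomic measure. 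Integrating $|x|^2$ against it yields $\sum_n|\lambda_n|^2|f_n|^2$ (the $\lambda_n$ being real), and \eqref{eqremspec2} together with \eqref{eqremspec3} closes the argument.

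An equivalent and perhaps cleaner route bypasses the spectral measure. Let $W:L^2(\Omega)\to\ltwoz$ be the unitary $Wf=(f_n)_{n\in\bz}$, and let $M$ be the multiplication operator $M(a_n)_n=(\lambda_n a_n)_n$ on $\ltwoz$, which is self-adjoint by Lemma \ref{lemfu3} applied to $\bz$ with counting measure and the real-valued function $n\mapsto\lambda_n$. For $f\in\D(A)$, symmetry of $A$ gives $\ip{Af}{\epsilon_n}=\ip{f}{A\epsilon_n}=\lambda_n f_n$, so $WAf=M(Wf)$; since $\|Af\|^2=\sum_n|\lambda_n|^2|f_n|^2<\infty$, we have $Wf\in\D(M)$, and therefore $WAW^{-1}\subseteq M$. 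As both $WAW^{-1}$ and $M$ are self-adjoint and a self-adjoint operator admits no proper symmetric extension, this inclusion must be an equality, whence $\D(A)=W^{-1}\D(M)$, which is precisely \eqref{eqsp3.1}.

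The one genuinely non-formal point — and the place where self-adjointness of $A$ (equivalently unitarity of the boundary matrix $B$) is indispensable — is the passage from the inclusion $WAW^{-1}\subseteq M$ to equality, equivalently the reverse implication $\sum_n|\lambda_n|^2|f_n|^2<\infty\Rightarrow f\in\D(A)$. For a merely symmetric extension this reverse inclusion fails in general, so I would take care to invoke maximality of self-adjoint operators (or, in the first route, the exact domain identity \eqref{eqremspec2}) rather than attempt a direct convergence estimate, which would only establish that the partial sums $\sum_{|n|\le N}\lambda_n f_n\epsilon_n$ form a Cauchy sequence without by itself placing the limit in the domain of the given extension.
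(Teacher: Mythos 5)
Your proposal is correct, and both of your routes lead to \eqref{eqsp3.1}; the interesting comparison is with how the paper arranges the same ingredients. The paper's proof establishes the \emph{hard} inclusion directly: for $g=\sum_n a_n\epsilon_n$ with $\sum_n|\lambda_n|^2|a_n|^2<\infty$ it computes $\ip{\Ds f}{g}=\ip{f}{\sum_n\lambda_n a_n\epsilon_n}$ for $f\in\D(A)$, concluding $g\in\D(A^*)=\D(A)$; this shows the diagonal operator $M_I$ on $\D_I=\{\sum_n a_n\epsilon_n:\sum_n|\lambda_n|^2|a_n|^2<\infty\}$ is \emph{restricted by} $A$, i.e.\ $M_I\subseteq A$, and then self-adjointness of $M_I$ (Corollary \ref{corfu3.1}) plus maximal symmetry forces $\D_I=\D(A)$. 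Your second route is the mirror image: you prove the \emph{easy} inclusion $WAW^{-1}\subseteq M$ via Parseval and symmetry ($\ip{Af}{\epsilon_n}=\lambda_n f_n$, $\|Af\|^2=\sum_n|\lambda_n|^2|f_n|^2$), and then recover the hard direction from the same maximality principle — legitimate, since an inclusion between two self-adjoint operators in either direction forces equality, and your closing paragraph correctly identifies this as the step where self-adjointness (rather than mere symmetry) of the extension is indispensable. Your first route, through the atomic spectral measure of Theorem \ref{thsp1} and the domain formula \eqref{eqremspec2} of Remark \ref{remspec}, is genuinely different from the paper and arguably cleaner; its one small debt is the identification $P(\Delta)f=\sum_{n:\lambda_n\in\Delta}f_n\epsilon_n$, which you rightly flag and which follows from countable additivity of $P$ over the discrete spectrum together with the fact that $\{\epsilon_n:\lambda_n=\lambda\}$ spans the eigenspace $P(\{\lambda\})L^2(\Omega)$ (any eigenvector orthogonal to all of them would be orthogonal to the whole basis). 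In short: the paper's computation is self-contained at the level of adjoints and needs no spectral calculus, your route 2 trades the adjoint computation for Parseval at the cost of leaning harder on maximality, and your route 1 buys brevity by invoking the spectral theorem's exact domain statement.
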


\begin{proof}
Let $g=\sum_{n\in\bz}a_n\epsilon_n\in L^2(\Omega)$ with $\sum_{n\in\bz}|\lambda_n|^2|a_n|^2<\infty$. Then, for $f\in\D(A)$, we have
$$\ip{Af}{g}=\ip{\Ds f}{g}=\sum_n\cj a_n\ip{\Ds f}{\epsilon_n}=\sum_n\cj a_n\ip{f}{\Ds^*\epsilon_{n}}=\sum_n\cj a_n\lambda_n\ip{f}{\epsilon_n}=\ip{f}{\sum_n \lambda_n a_n\epsilon_n}.$$
This implies that $g\in \D(A^*)=\D(A)$ and $Ag=A^*g=\sum_{n}\lambda_n a_n\epsilon_n$. Thus $\D_I:=\{\sum_n a_n\epsilon_n : \sum_n |\lambda_n|^2|a_n|^2<\infty\}$ is contained in $\D(A)$. 

But, by Corollary \ref{corfu3.1}, the diagonal operator $M_I(\sum_{n}a_n\epsilon_n)=\sum_n\lambda_n a_n\epsilon_n$ defined on $\D_I$ is self-adjoint. Since self-adjoints operators are maximally symmetric, it follows that $\D_I=\D(A)$.
\end{proof}

\section{The unitary group}

If we have a self-adjoint extension $A$ of the momentum operator $\Ds$, we can associate to it a one-parameter unitary group $U(t)=\exp(2\pi it A)$, $t\in\br$ as in Theorem \ref{thstone0}. In this section we present some basic properties of this unitary group and show that it acts as translations inside the intervals and it splits points at the endpoints, with probabilities given by the boundary matrix $B$.

\begin{theorem}\label{thu}
Let $A=\Ds$ on $\D_B$ a self-adjoint extension with boundary matrix $B$. Let $$U(t)=\exp{(2\pi i t A)},\quad (t\in\br),$$ be the associated one-parameter unitary group.
\begin{enumerate}
	\item The domain $\D_B$ is invariant for $U(t)$ for all $t\in\br$, i.e., if $f\in\Dmax$ with $B f(\vec\alpha)= f(\vec\beta)$, then $U(t)f\in\Dmax$ with $B(U(t)f)(\vec\alpha)=(U(t)f)(\vec\beta)$.
	\item Fix $i\in\{1,\dots,n\}$ and let $t\in\br$ such that $J_i\cap (J_i-t)\neq \ty$. Then, for $f\in L^2(\Omega)$, 
	
	\begin{equation}
(U(t) f)(x)=f(x+t),\mbox{ for a.e. }x\in J_i\cap (J_i-t).
	\label{equ1}
	\end{equation}

In particular,
\begin{equation}
(U(\beta_i-x) f)(x)=f(\beta_i),\, (U(\alpha_i-x)f)(x)=f(\alpha_i),\mbox{ for }f\in\D_B, x\in J_i.
\label{eq4.2.2}
\end{equation}
  \item For $f\in\D_B$, if $x\in J_i$ and $t>\beta_i-x$, then

\begin{equation}
\left[ U(t)f\right](x)=\pi_i\left(B\left[U(t-(\beta_i-x))f\right](\vec\alpha)\right).
\label{eq4.4.1}
\end{equation}
Here $\pi_i:\bc^n\rightarrow\bc$ denotes the projection onto the $i$-th component $\pi (x_1,x_2\dots,x_n)=x_i$.

\end{enumerate}
\end{theorem}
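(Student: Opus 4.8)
The plan is to handle the three parts in order, with part (2) carrying essentially all the work and parts (1), (3) following from it together with the abstract machinery of Stone's theorem.

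For part (1) there is nothing to prove beyond invoking the general theory. By Theorem \ref{thstone0}(c) the domain $\D(A)$ of the infinitesimal generator is invariant under the group $U(t)$, and Theorem \ref{th2.2} identifies $\D(A)=\D_B$. Hence $U(t)f\in\D_B$ whenever $f\in\D_B$, and membership in $\D_B$ is precisely the assertion that $U(t)f\in\Dmax$ with $B(U(t)f)(\vec\alpha)=(U(t)f)(\vec\beta)$.

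For part (2) I would first reduce to the action of $U$ on smooth functions supported strictly inside a single interval, and then settle that case by an abstract ODE-uniqueness argument. The principle I rely on is: if $w:\br\to\D(A)$ is $L^2$-differentiable with $w'(s)=2\pi i A\,w(s)$, then $w(s)=U(s)w(0)$ (and similarly $w'(s)=-2\pi i A\,w(s)$ forces $w(s)=U(-s)w(0)$); this comes from Theorem \ref{thstone0}(b) by differentiating $U(-s)w(s)$ and using that $U(-s)$ commutes with $A$. Fix $t$ with $0<t<\beta_i-\alpha_i$ (the case $t<0$ is symmetric, and for $|t|\geq\beta_i-\alpha_i$ the set $J_i\cap(J_i-t)$ is empty). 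For $\phi\in C_c^\infty\big(J_i\cap(J_i-t)\big)=C_c^\infty\big((\alpha_i,\beta_i-t)\big)$, the translates $v(s)=\phi(\cdot-s)$, $s\in[0,t]$, stay in $C_c^\infty(J_i)\subseteq\D(A)$ and satisfy $v'(s)=-2\pi i A\,v(s)$, so the uniqueness principle gives $U(-t)\phi=v(t)=\phi(\cdot-t)$. Using $U(t)^*=U(-t)$ and a change of variables,
$$\ip{U(t)f}{\phi}=\ip{f}{U(-t)\phi}=\ip{f}{\phi(\cdot-t)}=\int f(x+t)\,\cj{\phi(x)}\,dx,$$
and since $\phi$ ranges over a dense family of test functions on $(\alpha_i,\beta_i-t)$, this yields $(U(t)f)(x)=f(x+t)$ for a.e.\ $x\in J_i\cap(J_i-t)$.

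The "in particular" formula \eqref{eq4.2.2} follows by specializing (2) to $t=\beta_i-x$ (resp.\ $t=\alpha_i-x$): on $J_i\cap(J_i-t)=(\alpha_i,x)$ (resp.\ $(x,\beta_i)$) the function $U(t)f$ agrees a.e.\ with $y\mapsto f(y+t)$, whose continuous representative tends to $f(\beta_i)$ as $y\to x^-$ (resp.\ to $f(\alpha_i)$ as $y\to x^+$); since $f\in\Dmax$ makes these endpoint values well defined, this is the asserted identity. Finally, for part (3) I would use the group law $U(t)=U(\beta_i-x)\,U\big(t-(\beta_i-x)\big)$ and set $h=U\big(t-(\beta_i-x)\big)f$; applying \eqref{eq4.2.2} to $h$ gives $[U(t)f](x)=[U(\beta_i-x)h](x)=h(\beta_i)=\pi_i\big(h(\vec\beta)\big)$, and then the self-adjoint boundary condition $h(\vec\beta)=B\,h(\vec\alpha)$ converts this into $\pi_i\big(B\,[U(t-(\beta_i-x))f](\vec\alpha)\big)$, which is \eqref{eq4.4.1}. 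The last step requires $h\in\D_B$, which by part (1) holds once $f\in\D_B$; this is the precise point where the boundary matrix $B$ (and not just membership in $\Dmax$) genuinely enters.

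The main obstacle is the rigor in part (2): the generator identity only delivers $\tfrac{d}{dt}U(t)f=(U(t)f)'$ with the $t$-derivative taken in $L^2$, so one cannot directly integrate the transport equation pointwise. Passing to test functions supported away from $\partial\Omega$ and invoking ODE uniqueness on their smooth translates is what makes the local translation behavior precise while cleanly avoiding the boundary; once (2) is established, the remaining statements are short consequences of the group law and the $\Dmax$ endpoint-continuity.
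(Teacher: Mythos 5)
Your proposal is correct, but the core of it --- part (2) --- takes a genuinely different route from the paper. The paper proves \eqref{equ1} spectrally: by Theorem \ref{thsp1}, $A$ has pure point spectrum with eigenfunctions of the form $v_\lambda(x)=\sum_k c_k\chi_{J_k}(x)e^{2\pi i\lambda x}$, for which $(U(t)v_\lambda)(x)=e^{2\pi i\lambda t}v_\lambda(x)=v_\lambda(x+t)$ is immediate when $x,x+t\in J_i$; the general $f\in L^2(\Omega)$ is then handled by approximating with finite linear combinations of eigenvectors and passing to a.e.-convergent subsequences. You instead establish the dual identity $U(-t)\phi=\phi(\cdot-t)$ for $\phi\in C_c^\infty\bigl(J_i\cap(J_i-t)\bigr)$ via the standard ODE-uniqueness argument (differentiating $U(s)v(s)$ along the translate path $v(s)=\phi(\cdot-s)$, which indeed stays in $C_c^\infty(J_i)\subseteq\D(A)$), and transfer to arbitrary $f$ by duality and density of test functions; this is a complete and correct alternative. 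What each buys: your argument is independent of the spectral analysis of Section 3 --- it would work verbatim for generators without discrete spectrum --- while the paper's is shorter once Theorem \ref{thsp1} is in hand, and the paper reuses the same eigenbasis machinery for part (1) (though it remarks, as you do, that Theorem \ref{thstone0}(c) already suffices there, so your one-line treatment of (1) matches the paper's own observation). Your derivations of \eqref{eq4.2.2} (one-sided limit of the continuous representative, using the endpoint continuity of $\Dmax$ functions from Remark \ref{rem1.1}) and of part (3) (group law $U(t)=U(\beta_i-x)U(t-(\beta_i-x))$ plus the boundary condition) coincide with the paper's proof. Finally, your closing remark is a genuine catch rather than a gap: the last step of \eqref{eq4.4.1} uses $h(\vec\beta)=Bh(\vec\alpha)$ for $h=U(t-(\beta_i-x))f$, which requires $f\in\D_B$ and not merely $f\in\Dmax$ as the statement reads --- the paper's proof invokes its part (i) at exactly this point and thus implicitly assumes the same strengthened hypothesis.
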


\begin{proof}
(i) This follows from more general rules, see Theorem \ref{thstone0}(c), but we include a more direct proof. With the notation as in Theorem \ref{thsp3}, $f=\sum_n f_n\epsilon_n$ is in the domain of $A$, if and only if $\sum_n|\lambda_n|^2|f_n|^2<\infty$. Then, for $t\in\br$, $U(t)f=\sum_n f_ne^{2\pi i\lambda_nt}\epsilon_n$, and $\sum_n|\lambda_n|^2|f_ne^{2\pi i\lambda_nt}|^2=\sum_n|\lambda_n|^2|f_n|^2<\infty$, which means that $U(t)f$ is also in the domain of $A$.

(ii) Let $v_\lambda$ be an eigenvector for $A$ with eigenvalue $\lambda$. Then, by Theorem \ref{thsp1}, we have 
$$v_\lambda(x)=\sum_{k=1}^n c_k\chi_{J_k}(x)e^{2\pi i\lambda x},\quad (x\in\Omega),$$
for some constants $c_k\in\bc$. Then, for $x\in J_i\cap (J_i-t)$, we have $x,x+t\in J_i$, and  
\begin{equation}\label{eq4.2.1}
(U(t)v_\lambda)(x)=e^{2\pi i\lambda t}v_\lambda(x)=\sum_{k=1}^nc_k\chi_{J_k}(x)e^{2\pi i\lambda(x+t)}=v_\lambda(x+t).
\end{equation}

Now let $f\in L^2(\Omega)$. One can find a sequence $\{f_n\}$ of finite linear combinations of eigenvectors, such that $\lim f_n=f$ in $L^2(\Omega)$. Then $\lim U(t)f_n=U(t)f$, passing to subsequences, we can assume in addition that $\{f_n\}$ converges to $f$ pointwise a.e. $\Omega$, and $U(t)f_n$ converges to $U(t)f$ pointwise a.e. in $\Omega$. Then, for a.e. $x\in J_i\cap (J_i-t)$, we have 
$$(U(t)f)(x)=\lim (U(t)f_n)(x)=\lim f_n(x+t)=f(x+t).$$
(Note that we used also the fact that translation by $t$ preserves measure zero sets.)

The first relation in \eqref{eq4.2.2} follows from \eqref{eq4.2.1}, by taking $t=\beta_i-x-\epsilon$ and letting $\epsilon\rightarrow 0$. Similarly for the second relation.

(iii) Indeed, we have, with (i) and (ii),
$$\left[ U(t)f\right](x)=\left[ U(\beta_i-x)U(t-(\beta_i-x)) f\right](x)=\left[U(t-(\beta_i-x))f\right](\beta_i)$$$$ =\pi_i\left(\left[U(t-(\beta_i-x))f\right](\vec\beta)\right)=\pi_i\left(B\left[U(t-(\beta_i-x))f\right](\vec\alpha)\right).$$

\end{proof}

\section{Spectral sets}

In this section we consider the case when $\Omega$ is a spectral set. We present various characterizations of this property in terms of the self-adjoint extensions of the momentum operator $\Ds$ and in terms of the associated unitary groups. 

\begin{definition}\label{deffu1}
Assume that $\Omega$ is a spectral set with spectrum $\Lambda$. Recall that $e_\lambda$ denotes the exponential function $e_\lambda(x)=e^{2\pi i\lambda x}$. In order to make the vectors $e_\lambda$ in $L^2(\Omega)$ of norm one, we renormalize the Lebesgue measure on $\Omega$ by $\frac{1}{|\Omega|}\,dx$ (or we can simply assume that $\Omega$ has measure 1).

 The {\it Fourier transform (associated to the spectrum $\Lambda$)} is the unitary operator 
\begin{equation}
\mathcal F_\Lambda:L^2(\Omega)\rightarrow l^2(\Lambda), \quad \mathcal F_\Lambda f=\left(\ip{f}{e_\lambda}\right)_{\lambda\in\Lambda}.
\label{eqfu1.1}
\end{equation}

Define also the unbounded operator of {\it multiplication by the identity function} on $l^2(\Omega)$: 
\begin{equation}
M_{I}(a_\lambda)_{\lambda\in\Lambda}=(\lambda a_\lambda)_{\lambda\in\Lambda},\quad \D(M_I)=\left\{(a_\lambda)_{\lambda\in\Lambda}\in l^2(\Lambda) : \sum_{\lambda\in\Lambda}|\lambda|^2|a_\lambda|^2<\infty\right\}.
\label{eqfu1.2}
\end{equation}

Define the {\it unitary group associated to $\Lambda$} on $L^2(\Omega)$, by 

\begin{equation}
U_\Lambda(t)\left(\sum_{\lambda\in\Lambda} a_\lambda e_\lambda\right)=\sum_{\lambda\in\Lambda}e^{2\pi i\lambda t}a_\lambda e_\lambda,\mbox{ for }\sum_{\lambda\in\Lambda}a_\lambda e_\lambda\in L^2(\Omega), t\in\br.
\label{eqfu1.3}
\end{equation}

\end{definition}

\begin{definition}\label{deffulo}
A {\it unitary group of local translations} on $\Omega$ is a strongly continuous one parameter unitary group $U(t)$ on $L^2(\Omega)$ with the property that, for any $f\in L^2(\Omega)$ and any $t\in\br$, 
\begin{equation}
(U(t)f)(x)=f(x+t)\mbox{ for a.e. }x\in\Omega\cap(\Omega-t).
\label{eqlo1}
\end{equation}

\end{definition}

\begin{remark}
Note the difference between the Definition \ref{deffulo}, and the property (ii) in Theorem \ref{thu}. The Definition \ref{deffulo} is a stronger condition, because it allows jumps between different intervals $J_i$ of $\Omega$. 
\end{remark}
\begin{definition}\label{deffu3.1}
A unitary boundary matrix $B$ is called {\it spectral} if, for every $\lambda\in\br$, the equation $BE_\lambda(\vec\alpha)c=E_\lambda(\vec\beta)c$, $c\in \bc^n$ has either only the trivial solution $c=0$, or only constant solutions of the form $c=\alpha (1,1,\dots,1)$, $\alpha\in\bc$. 

\end{definition}

\begin{theorem}\label{thfu1}
Assume the $\Omega$ is a spectral set with spectrum $\Lambda$. Define the unbounded operator $A$ on $L^2(\Omega)$ by 
$$A\left(\sum_{\lambda\in\Lambda} f_\lambda e_\lambda\right)=\sum_{\lambda\in\Lambda}\lambda f_\lambda e_\lambda,\quad\D(A)=\left\{f=\sum_{\lambda\in\Lambda}f_\lambda e_\lambda : \sum_{\lambda\in\Lambda}|\lambda|^2|f_\lambda|^2<\infty\right\}.$$
Then, the operator $A$ is conjugate to the multiplication operator $M_I$, by the Fourier transform, i.e., 
\begin{equation}
A=\mathcal F_\Lambda^{-1}M_I\mathcal F_\Lambda.
\label{eqfu2.1}
\end{equation}
The domain $\D(A)$ contains $\D_0(\Omega)$ and all functions $e_\lambda$, $\lambda\in\Lambda$, and $A$ is a self-adjoint extension of $\Ds|_{C_c^\infty(\Omega)}$ with the property that all eigenvectors are constant multiples of exponential functions $ce_\lambda$, $c\in\bc$, $\lambda\in \Lambda$.

Conversely, if there exists a self-adjoint extension $A$ of  $\Ds|_{C_c^\infty(\Omega)}$ with the property that all eigenvectors are constant multiples of exponential functions $ce_\lambda$, $c\in \bc$, $\lambda\in\br$, then $\Omega$ is spectral, with spectrum 

\begin{equation}
\Lambda:=\left\{\lambda\in\br : e_\lambda\in \D(A)\right\}=\sigma(A).
\label{eqfu2.2}
\end{equation}

\end{theorem}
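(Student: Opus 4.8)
The plan is to treat the two directions separately, transporting everything through the Fourier transform $\mathcal F_\Lambda$ in the forward direction, and invoking the spectral-decomposition results of Theorem \ref{thsp1} in the converse.

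For the forward direction, I would first observe that since $\Omega$ is spectral with spectrum $\Lambda$, the renormalized family $\{e_\lambda : \lambda\in\Lambda\}$ is an orthonormal basis, so $\mathcal F_\Lambda$ is unitary and carries $e_\lambda$ to the standard basis vector of $l^2(\Lambda)$. The identity \eqref{eqfu2.1} is then immediate from the definition of $A$: the domain condition $\sum_\lambda|\lambda|^2|f_\lambda|^2<\infty$ is exactly the pullback under $\mathcal F_\Lambda$ of $\D(M_I)$, and $A$ acts with the same diagonal multipliers $\lambda$. Since $M_I$ is self-adjoint by Corollary \ref{corfu3.1} and unitary conjugation preserves self-adjointness, $A$ is self-adjoint. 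To see that $A$ extends $\Ds|_{C_c^\infty(\Omega)}$, I would compute the Fourier coefficients of $\Ds f=\frac{1}{2\pi i}f'$ for $f\in C_c^\infty(\Omega)$: integration by parts gives $\ip{\Ds f}{e_\lambda}=\lambda\ip{f}{e_\lambda}$, the boundary terms vanishing because $f$ has compact support in $\Omega$. Hence $\Ds f=\sum_\lambda\lambda\ip{f}{e_\lambda}e_\lambda=Af$, and $f\in\D(A)$ because $\sum_\lambda|\lambda|^2|\ip{f}{e_\lambda}|^2=\|\Ds f\|^2<\infty$.

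Since $A$ is self-adjoint, hence closed, and $\Ds|_{\mathscr D_0(\Omega)}=(\Ds|_{C_c^\infty(\Omega)})^{**}$ is the smallest closed extension by Theorem \ref{th1.1}, it follows that $\mathscr D_0(\Omega)\subseteq\D(A)$; each $e_\lambda$ lies in $\D(A)$ by construction. The eigenvector claim is a direct consequence of the diagonal form: if $Af=\mu f$ with $f=\sum_\lambda f_\lambda e_\lambda$, then $\lambda f_\lambda=\mu f_\lambda$ for all $\lambda$, forcing $f_\lambda=0$ unless $\lambda=\mu$, so $f$ is a constant multiple of $e_\mu$. For the converse, I would set $\Lambda:=\{\lambda\in\br : e_\lambda\in\D(A)\}$ and first identify it with $\sigma(A)$. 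Since $A$ is a self-adjoint, hence symmetric, extension of $\Ds|_{C_c^\infty(\Omega)}$, Theorem \ref{th1.1} gives $A\subseteq\Ds^*$, so $Ae_\lambda=\frac{1}{2\pi i}e_\lambda'=\lambda e_\lambda$ whenever $e_\lambda\in\D(A)$; thus $\Lambda\subseteq\sigma(A)$ and each such $e_\lambda$ is an eigenvector. Conversely, Theorem \ref{thsp1} says $\sigma(A)$ consists entirely of eigenvalues (the spectral measure is atomic), and by hypothesis every eigenvector is a constant multiple of some $e_\lambda$, so each $\lambda\in\sigma(A)$ has $e_\lambda\in\D(A)$, giving $\sigma(A)\subseteq\Lambda$.

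Orthogonality of $\{e_\lambda:\lambda\in\Lambda\}$ then follows from the standard fact that eigenvectors of a self-adjoint operator attached to distinct real eigenvalues are orthogonal, and completeness follows because the Spectral Theorem (via Theorem \ref{thsp1}) guarantees that the eigenspaces $P(\{\lambda\})L^2(\Omega)$ span $L^2(\Omega)$, while the hypothesis forces each such eigenspace to equal $\bc e_\lambda$. Hence $\{e_\lambda:\lambda\in\Lambda\}$ is a complete orthogonal system and $\Omega$ is spectral with spectrum $\Lambda=\sigma(A)$. The routine parts are the definitional identity \eqref{eqfu2.1} and the orthogonality statements; the step needing the most care is the integration-by-parts verification that $A$ genuinely extends $\Ds|_{C_c^\infty(\Omega)}$, together with the bookkeeping in the converse tying $\Lambda$, the eigenvalues, and the one-dimensionality of the eigenspaces. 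Neither is a genuine obstacle, but the converse depends essentially on the atomic spectral decomposition of Theorem \ref{thsp1}, without which the spanning property would be the real difficulty.
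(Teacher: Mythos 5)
Your proposal is correct and follows essentially the same route as the paper: conjugation by $\mathcal F_\Lambda$ to the diagonal multiplication operator $M_I$ (self-adjoint by Corollary \ref{corfu3.1}), the integration-by-parts computation $\ip{\Ds f}{e_\lambda}=\lambda\ip{f}{e_\lambda}$ to show $A$ extends the momentum operator, coefficient comparison in the orthonormal basis for the eigenvector claim (the content of the paper's Lemma \ref{lemfuei}), and the converse via the atomic spectral decomposition and one-dimensional eigenspaces from Theorem \ref{thsp1}. The only cosmetic difference is that you obtain $\mathscr D_0(\Omega)\subseteq\mathscr D(A)$ abstractly from the minimality of the closure $\left(\Ds|_{C_c^\infty(\Omega)}\right)^{**}=\Ds|_{\mathscr D_0(\Omega)}$ given in Theorem \ref{th1.1}, whereas the paper runs the Fourier-coefficient computation directly for $f\in\mathscr D_0(\Omega)$; both are valid.
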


\begin{proof}
Equation \eqref{eqfu2.1} follows from a direct computation. To see that $A$ is self-adjoint, it is enough to check that $M_I$ is self-adjoint, and this is a consequence of Corollary \ref{corfu3.1}.

Clearly, the exponential functions $e_\lambda$ are in the domain $\D(A)$. Let's check that $\D_0(\Omega)$ is also contained in $\D(A)$, and $Af=\Ds f$ for $f\in \D_0(\Omega)$. Let $f=\sum_\lambda f_\lambda e_\lambda\in \D_0(\Omega)$ and $\lambda\in\Lambda (\subseteq \br)$. Then,
$$c_\lambda:=\ip{\Ds f}{e_\lambda}=\ip{f}{\Ds^* e_\lambda}=\ip{f}{\frac{1}{2\pi i} e_\lambda'}=\ip{f}{\lambda e_\lambda}=\lambda\ip{f}{e_\lambda}=\lambda f_\lambda.$$
Therefore, $\sum_\lambda |\lambda|^2|f_\lambda|^2=\sum_\lambda |c_\lambda|^2=\|\Ds f\|^2<\infty$, and 
$$\Ds f=\sum_\lambda \lambda f_\lambda e_\lambda=A f.$$

This shows that $A$ is indeed a self-adjoint extension of $\Ds|_{C_c^\infty(\Omega)}$.

Now we check that all eigenvectors of $A$ are constant multiples of $e_\lambda$. This follows from the next Lemma, and the fact that self-adjoint operators are closed.

\begin{lemma}\label{lemfuei}
Let $A$ be a closed unbounded operator on a Hilbert space $H$. Assume that there exists an orthonormal basis of $H$, $\{e_\lambda\}_{\lambda\in\Lambda}$, where $\Lambda\subset \bc$ and $Ae_\lambda=\lambda e_\lambda$, for all $\lambda\in\Lambda$. Then every eigenvector $f=\sum_\lambda f_\lambda e_\lambda$ with the property that $\sum_\lambda|\lambda|^2|f_\lambda|^2<\infty$ is of the form $ce_\lambda$, for some $c\in\bc$, $\lambda\in\Lambda$.

\end{lemma}

\begin{proof}
 Suppose $f=\sum_\lambda f_\lambda e_\lambda$ is an eigenvector for $A$ with eigenvalue $\lambda_0$. Then 
$$Af=\lambda_0 f=\sum_\lambda \lambda_0f_\lambda e_\lambda.$$
On the other hand , we have that $\sum_{\mbox{ finite}} f_\lambda e_\lambda$ converges in $L^2(\Omega)$ to $\sum_\lambda f_\lambda e_\lambda$; also
$$A\left(\sum_{\mbox{ finite}} f_\lambda e_\lambda\right)=\sum_{\mbox{ finite}}\lambda f_\lambda e_\lambda \rightarrow \sum_{\lambda} \lambda f_\lambda e_\lambda,$$
because $\sum_{\lambda}|\lambda|^2|f_\lambda|^2<\infty$. Since $A$ is closed we get 
$$Af=A\left(\sum_\lambda f_\lambda e_\lambda\right)=\sum_\lambda \lambda f_\lambda e_\lambda.$$
This means that $\lambda_0 f_\lambda=\lambda f_\lambda$ for all $\lambda\in\Lambda$, which implies that either $\lambda_0=\lambda$ or $f_\lambda=0$. Thus all the coefficients $f_\lambda$ are zero except for $f_{\lambda_0}$, so $f=f_{\lambda_0}e_{\lambda_0}$. 
\end{proof}

For the converse in the Theorem \ref{thfu1}, assume $A$ is a self-adjoint extension of $\Ds|_{C_c^\infty(\Omega)}$ with the property that all eigenvectors are constant multiples of exponential functions. Then, with Theorem \ref{thsp1}, for $\lambda$ in the spectrum $\sigma(A)=:\Lambda$, the subspace $P(\{\lambda\})L^2(\Omega)$ is one-dimensional, spanned by $e_\lambda$. Then the set of all eigenvectors for $A$, $\{e_\lambda : \lambda\in\Lambda\}$ is an orthonormal basis for $L^2(\Omega)$. 
Moreover, if $e_\lambda\in \D(A)$ for some $\lambda\in\br$, then $Ae_\lambda=\frac{1}{2\pi i}e_\lambda'=\lambda e_\lambda$, so \eqref{eqfu2.2} holds.

\end{proof}

\begin{theorem}\label{thfu4}
Assume that $\Omega$ is spectral with spectrum $\Lambda$. Let $A=A_\Lambda$ be the self-adjoint extension of $\Ds|_{C_c^\infty(\Omega)}$ defined in Theorem \ref{thfu1}, and let $B=B_\Lambda$ be the unitary boundary matrix associated to this extension as in Theorem \ref{th2.2}. Then $B$ is a spectral boundary matrix and it is uniquely and well-defined by the conditions 
\begin{equation}
Be_\lambda(\vec \alpha)=e_\lambda(\vec \beta),\mbox{ for all }\lambda\in\Lambda.
\label{eqfu4.1}
\end{equation}
Moreover 
\begin{equation}
\textup{span}\{ e_\lambda(\vec \alpha) : \lambda\in\Lambda\}=\textup{span} \{e_\lambda(\vec\beta):\lambda\in\Lambda\}=\bc^n.
\label{eqfu4.2}
\end{equation}

Conversely, if there exists a spectral boundary matrix $B$, then $\Omega$ is spectral with spectrum 
\begin{equation}
\Lambda=\{\lambda\in\br : Be_\lambda(\vec\alpha)=e_\lambda(\vec\beta)\}.
\label{eqfu4.3}
\end{equation}
\end{theorem}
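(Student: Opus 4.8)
The plan is to build everything on top of the two structural results already in place: Theorem \ref{thfu1}, which produces the diagonal self-adjoint extension $A=A_\Lambda$ whose eigenvectors are precisely the $e_\lambda$, $\lambda\in\Lambda$, and Theorem \ref{th2.2}, which attaches to every self-adjoint extension a \emph{unitary} boundary matrix $B$ satisfying $Bf(\vec\alpha)=f(\vec\beta)$ for every $f\in\D(A)=\D_B$. Here $B$ is defined \emph{from} $A$, so there is no circularity: Theorem \ref{thfu1} independently guarantees $e_\lambda\in\D(A)$ for each $\lambda\in\Lambda$, and since the boundary values of $e_\lambda$ are exactly $e_\lambda(\vec\alpha)$ and $e_\lambda(\vec\beta)$ (Remark \ref{rem1.1}), feeding $f=e_\lambda$ into the boundary relation of Theorem \ref{th2.2} yields \eqref{eqfu4.1} at once. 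This settles existence; the ``well-defined and unique'' claim then reduces entirely to the spanning statement \eqref{eqfu4.2}, because once $\{e_\lambda(\vec\alpha):\lambda\in\Lambda\}$ spans $\bc^n$, any linear map agreeing with $B$ on these vectors must coincide with $B$, so \eqref{eqfu4.1} pins $B$ down uniquely.

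I expect \eqref{eqfu4.2} to be the crux, and I would prove it by a boundary-localization argument. Suppose $v=(v_1,\dots,v_n)\in\bc^n$ is orthogonal to every $e_\lambda(\vec\alpha)$, $\lambda\in\Lambda$; the goal is $v=0$. Fix $\delta>0$ smaller than $\min_i(\beta_i-\alpha_i)$ and set $g_\delta=\sum_{i=1}^n v_i\,\chi_{(\alpha_i,\alpha_i+\delta)}$, a genuine element of $L^2(\Omega)$ supported on disjoint subintervals. A direct computation of its Fourier coefficient gives
$$\ip{g_\delta}{e_\lambda}=\frac{1-e^{-2\pi i\lambda\delta}}{2\pi i\lambda}\,\ip{v}{e_\lambda(\vec\alpha)},$$
with the scalar factor read as $\delta$ when $\lambda=0$, so the hypothesis forces $\ip{g_\delta}{e_\lambda}=0$ for all $\lambda\in\Lambda$. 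Since $\{e_\lambda:\lambda\in\Lambda\}$ is an orthogonal basis of $L^2(\Omega)$, this makes $g_\delta=0$ a.e., and disjointness of the supports forces every $v_i=0$. The analogue for the $\beta$'s is identical using $\chi_{(\beta_i-\delta,\beta_i)}$; alternatively, since $B$ is unitary and carries each $e_\lambda(\vec\alpha)$ to $e_\lambda(\vec\beta)$, the two spanning statements are automatically equivalent.

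Next I would check that $B$ is spectral in the sense of Definition \ref{deffu3.1}, i.e. that for every real $\lambda$ the system $BE(\lambda\vec\alpha)c=E(\lambda\vec\beta)c$ has either only $c=0$ or only the constant solutions $c=\gamma(1,\dots,1)$. This dichotomy is read off Theorem \ref{thsp1}, where solutions $c$ correspond bijectively to eigenvectors $e^{2\pi i\lambda x}\sum_i c_i\chi_{J_i}$ of $A$ for the eigenvalue $\lambda$. If $\lambda\notin\sigma(A)$, Proposition \ref{prsp2} says the matrix is one-to-one, so $c=0$ only. If $\lambda\in\sigma(A)$, then by Theorem \ref{thfu1} every eigenvector of $A$ is a constant multiple of a single exponential $e_\mu$; but $e^{2\pi i\lambda x}\sum_i c_i\chi_{J_i}$ coincides with some $c\,e_\mu$ only when all the $c_i$ are equal, so the solution space is exactly $\bc\cdot(1,\dots,1)$. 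This is precisely the spectral condition.

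Finally, for the converse I would reverse the construction. Given a spectral unitary $B$, Theorem \ref{th2.2} produces the self-adjoint extension $A=\Ds|_{\D_B}$, and Theorem \ref{thsp1} describes its atomic, discrete spectrum via solvability of $BE(\lambda\vec\alpha)c=E(\lambda\vec\beta)c$. The spectral hypothesis then forces $\lambda\in\sigma(A)$ exactly when the system has a nonzero solution, which (by the dichotomy) happens exactly when $c=(1,\dots,1)$ is a solution, i.e. exactly when $Be_\lambda(\vec\alpha)=e_\lambda(\vec\beta)$; hence $\sigma(A)=\Lambda$ as defined in \eqref{eqfu4.3}. For such $\lambda$ the eigenspace is one-dimensional, and since $c=(1,\dots,1)$ yields the function $e^{2\pi i\lambda x}\sum_i\chi_{J_i}=e_\lambda$, it is spanned by $e_\lambda$. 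The Spectral Theorem then makes $\{e_\lambda:\lambda\in\Lambda\}$ an orthogonal basis of $L^2(\Omega)$ (orthogonal as eigenvectors of a self-adjoint operator for distinct eigenvalues, complete because the eigenspaces exhaust $L^2(\Omega)$), which is exactly the assertion that $\Omega$ is spectral with spectrum $\Lambda$.
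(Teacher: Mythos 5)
Your proposal is correct, and on the central point it takes a genuinely different route from the paper. The forward direction's crux is the spanning statement \eqref{eqfu4.2}: the paper proves it by contradiction, arguing that if $B_l=\textup{span}\{e_\lambda(\vec\alpha):\lambda\in\Lambda\}$ were proper, the partial isometry $B$ would admit two distinct unitary extensions $\tilde B\neq\tilde B'$, yet Lemma \ref{lemfuei} and Theorem \ref{thsp3} would force $\D_{\tilde B}=\D_{\tilde B'}=\{\sum_\lambda f_\lambda e_\lambda:\sum_\lambda|\lambda|^2|f_\lambda|^2<\infty\}$, hence $\tilde B=\tilde B'$ --- a rigidity argument about self-adjoint extensions. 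You instead prove spanning directly with the test function $g_\delta=\sum_i v_i\chi_{(\alpha_i,\alpha_i+\delta)}$, whose coefficients factor as $\ip{g_\delta}{e_\lambda}=\frac{1-e^{-2\pi i\lambda\delta}}{2\pi i\lambda}\ip{v}{e_\lambda(\vec\alpha)}$, so that completeness of $\{e_\lambda\}$ and disjointness of supports kill $v$; this is more elementary and self-contained, and it also renders the paper's separate well-definedness check (the inner-product identity \eqref{eqfu4.4} transferred from orthogonality of the $e_\lambda$) unnecessary, since you rightly observe that $B$ already exists via Theorem \ref{th2.2} applied to $A_\Lambda$, and spanning alone then pins it down. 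Note that the possible vanishing of the scalar factor when $\lambda\delta\in\bz\setminus\{0\}$ is harmless in the direction you use it. The paper's route buys a construction of $B$ purely from the relations \eqref{eqfu4.1} (useful if one wants $B$ without invoking Theorem \ref{th2.2} first) and stays within the operator-extension framework; your route buys a shorter, quantitative argument. The spectrality of $B$ (the dichotomy of Definition \ref{deffu3.1}, with Proposition \ref{prsp2} handling $\lambda\notin\sigma(A)$) and the converse direction in your write-up match the paper's proof in substance, with your identification $\sigma(A)=\Lambda$ via the constant solution $c=(1,\dots,1)$ making explicit a step the paper leaves implicit.
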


\begin{proof}
By Theorem \ref{thfu1}, the only eigenvectors of the operator $A$ are multiples of $e_\lambda$, $\lambda\in\Lambda$. On the other hand, by Theorem \ref{thsp1}, the eigenvectors are functions of the form 
$$f=\sum_{i=1}^n\left(c_i\chi_{J_i}\right)e_\lambda,$$
where $c\in \bc^n$, with $BE_\lambda(\vec\alpha)c=E_\lambda(\vec\beta)c$. Thus, if  $BE_\lambda(\vec\alpha)c=E_\lambda(\vec\beta)c$ for some non-zero $c\in\bc^n$, then $f$ is an eigenvector, but then it must be a constant multiple of $e_\lambda$ so all the components of $c$ are the same. This means that $B$ is a spectral boundary matrix. 

We check now that the relation \eqref{eqfu4.1} completely determines $B$ as a well-defined unitary matrix. Since $\Lambda$ is a spectrum, for $\lambda\neq\lambda'$ in $\Lambda$ we have
$$0=\ip{e_\lambda}{e_{\lambda'}}=\sum_{k=1}^n\int_{\alpha_k}^{\beta_k}e^{2\pi i(\lambda-\lambda')x}\,dx=\sum_{k=1}^n\frac{1}{2\pi i(\lambda-\lambda')}\left(e^{2\pi i(\lambda-\lambda')\beta_k}-e^{2\pi i(\lambda-\lambda')\alpha_k}\right),$$
which means that 

\begin{equation}
\ip{e_\lambda(\vec\alpha)}{e_{\lambda'}(\vec\alpha)}=\ip{e_\lambda(\vec\beta)}{e_{\lambda'}(\vec\beta)},\mbox{ for all }\lambda,\lambda'\in\Lambda.
\label{eqfu4.4}
\end{equation}

Define the linear operator $B$ from $\textup{span}\{ e_\lambda(\vec \alpha) : \lambda\in\Lambda\}$ to $\textup{span} \{e_\lambda(\vec\beta):\lambda\in\Lambda\}$, by 
$$B\left(\sum_{\lambda\in\Lambda}a_\lambda e_\lambda(\vec\alpha)\right)=\sum_{\lambda\in\Lambda}a_\lambda e_\lambda(\vec\beta),$$
where only finitely many coefficients are non-zero. 

The operator is well-defined, because, if $\sum_\lambda a_\lambda e_\lambda(\vec\alpha)=0$, then 
$$0=\ip{\sum_\lambda a_\lambda e_\lambda(\vec\alpha)}{\sum_\lambda a_\lambda e_\lambda(\vec\alpha)}=\sum_{\lambda,\lambda'}a_\lambda\cj{a}_{\lambda'}\ip{e_\lambda(\vec\alpha)}{e_{\lambda'}(\vec\alpha)}
$$$$=\sum_{\lambda,\lambda'}a_\lambda\cj{a}_{\lambda'}\ip{e_\lambda(\vec\beta)}{e_{\lambda'}(\vec\beta)}=\ip{\sum_\lambda a_\lambda e_\lambda(\vec\beta)}{\sum_\lambda a_\lambda e_\lambda(\vec\beta)}$$
A similar argument shows that $B$ is unitary.

Next we prove that $B_l:=\textup{span}\{e_\lambda(\vec\alpha):\lambda\in\Lambda\}=\bc^n$. Since $B$ is unitary, the same will be true for $\vec\beta$. We proceed by contradiction, if $B_l$ is not the entire space $\bc^n$ then it has a non-trivial orthogonal complement, and therefore we can extend the partial isometry $B$, in two different ways to unitaries $\tilde B$ and $\tilde B'$. Both of them give rise, by Theorem \ref{th2.2}, to self-adjoint extensions of $\Ds|_{C_c^\infty(\Omega)}$, and, since $Be_\lambda(\vec\alpha)=e_\lambda(\vec\beta)$, we have $e_\lambda\in\D_{\tilde B}\cap \D_{\tilde B'}$. Then, with Lemma \ref{lemfuei}, all eigenvectors are multiples of $e_\lambda$, $\lambda\in\Lambda$, and with Theorem \ref{thsp3}, we get that 
$$\D_{\tilde B}=\left\{\sum_\lambda f_\lambda e_\lambda : \sum_\lambda |\lambda|^2|f_\lambda|^2<\infty\right\}=\D_{\tilde B'}.$$
But this means that $\tilde B=\tilde B'$, a contradiction, and \eqref{eqfu4.2} follows.

For the converse, if a spectral unitary boundary matrix is given, then the self-adjoint extension associated to it has all eigenvectors of the form $c e_\lambda$, with $c\in\bc$ and $\lambda\in\Lambda$ as in \eqref{eqfu4.3}, and they form an orthonormal basis. Thus $\Omega$ is spectral. 
\end{proof}

\begin{theorem}\label{thfu5}
Assume that $\Omega$ is spectral with spectrum $\Lambda$. Then the unitary group $U=U_\Lambda$ associated to $\Lambda$ is a unitary group of local translations. In addition, if $A$ is the self-adjoint extension associated to $\Lambda$ as in Theorem \ref{thfu1}, then 
\begin{equation}
U(t)=\exp(2\pi i tA),\quad (t\in\br).
\label{eqfu5.1}
\end{equation}
Conversely, if there exists a unitary group of local translations $(U(t))_{t\in\br}$ for $\Omega$, then $\Omega$ is spectral. 
\end{theorem}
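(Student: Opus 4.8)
The statement has three parts: (1) that $U_\Lambda$ is a group of local translations, (2) the identification $U_\Lambda(t) = \exp(2\pi i t A_\Lambda)$, and (3) the converse, that existence of a local translation group forces $\Omega$ to be spectral. My plan is to dispatch the forward direction quickly using the machinery already built, then focus the real effort on the converse.

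\medskip

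\emph{Forward direction.} For \eqref{eqfu5.1}, I would simply observe that $A_\Lambda$ has the orthonormal eigenbasis $\{e_\lambda\}_{\lambda\in\Lambda}$ with $A_\Lambda e_\lambda = \lambda e_\lambda$ (Theorem \ref{thfu1}), so by the spectral calculus in Remark \ref{remspec}, $\exp(2\pi i t A_\Lambda)$ acts on the basis vector $e_\lambda$ by multiplication by $e^{2\pi i \lambda t}$. Comparing with \eqref{eqfu1.3}, this is exactly $U_\Lambda(t)$, so the two groups coincide. Given this, the local-translation property is inherited from Theorem \ref{thu}(ii): that result says that for the self-adjoint extension $A = A_\Lambda$, its group $\exp(2\pi i t A)$ satisfies $(U(t)f)(x) = f(x+t)$ for a.e.\ $x \in J_i \cap (J_i - t)$. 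Since $\Omega \cap (\Omega - t) = \bigcup_i \bigl(\Omega \cap (J_i - t)\bigr)$ and on each piece the overlap that matters is internal to some $J_j$, I would need to verify that the union of the per-interval local translation identities assembles to the global identity \eqref{eqlo1} on all of $\Omega \cap (\Omega - t)$; the subtle point is that \eqref{eqlo1} allows $x$ and $x+t$ to lie in \emph{different} intervals, whereas Theorem \ref{thu}(ii) only covers $x, x+t$ in the \emph{same} interval. This is where the Remark after Definition \ref{deffulo} warns that local translations are strictly stronger. So I must show that for the specific group $U_\Lambda$, the stronger cross-interval property actually holds — this follows because each eigenfunction $e_\lambda$ is a \emph{single} exponential on all of $\Omega$ (not a different constant multiple on each $J_i$), so $(U_\Lambda(t)e_\lambda)(x) = e^{2\pi i \lambda t} e_\lambda(x) = e_\lambda(x+t)$ holds whenever $x, x+t \in \Omega$ regardless of which intervals they lie in; then I pass to a general $f \in L^2(\Omega)$ by the density/subsequence approximation argument already used in the proof of Theorem \ref{thu}(ii).

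\medskip

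\emph{Converse direction.} This is the main obstacle. Given a strongly continuous unitary group $U(t)$ satisfying the local-translation property \eqref{eqlo1}, I would first invoke Stone's Theorem \ref{thstone} to produce its self-adjoint generator $A$, so $U(t) = \exp(2\pi i t A)$. The plan is to show that $A$ is a self-adjoint extension of $\Ds|_{C_c^\infty(\Omega)}$ whose eigenvectors are multiples of exponentials, and then quote the converse half of Theorem \ref{thfu1} to conclude $\Omega$ is spectral. To see that $A$ extends $\Ds$: for $f \in C_c^\infty(\Omega)$ and $t$ small, the local-translation formula gives $(U(t)f)(x) = f(x+t)$ for a.e.\ $x$ in the (full-measure, since $f$ has compact support away from $\partial\Omega$) overlap, so $\frac{1}{t}(U(t)f - f) \to f'$ in $L^2$; by Theorem \ref{thstone0}(b)(c) this means $f \in \D(A)$ and $A f = \frac{1}{2\pi i} f' = \Ds f$. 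The harder step is showing the eigenvectors are exponentials: if $A v = \lambda v$, then $U(t) v = e^{2\pi i \lambda t} v$, and plugging into \eqref{eqlo1} gives $e^{2\pi i \lambda t} v(x) = v(x+t)$ for a.e.\ $x \in \Omega \cap (\Omega - t)$; iterating this functional equation over a connected interval forces $v(x) = c\, e^{2\pi i \lambda x}$ on each $J_i$, and the cross-interval reach of \eqref{eqlo1} (translations carrying one interval onto an overlap with another) ties the constants $c_i$ together so that $v$ is a \emph{single} global exponential $c\, e_\lambda$ up to the measure-zero boundary. Once every eigenvector of the self-adjoint $A$ is a constant multiple of some $e_\lambda$ with $\lambda \in \br$, the converse of Theorem \ref{thfu1} applies verbatim and yields that $\Omega$ is spectral with spectrum $\sigma(A)$.

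\medskip

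The step I expect to be genuinely delicate is extracting $v(x) = c\,e^{2\pi i\lambda x}$ as a \emph{global} exponential from the almost-everywhere functional equation $e^{2\pi i\lambda t} v(x) = v(x+t)$ — in particular handling the exceptional null sets uniformly in $t$ and using the cross-interval translations to rigidly link the interval constants, since this is precisely the feature that distinguishes local translations from the weaker per-interval property of Theorem \ref{thu}(ii) and is what ultimately forces spectrality.
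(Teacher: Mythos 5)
Your forward direction is essentially the paper's own argument: since each $e_\lambda$ is a single global exponential, $(U_\Lambda(t)e_\lambda)(x)=e^{2\pi i\lambda t}e_\lambda(x)=e_\lambda(x+t)$ whenever $x,x+t\in\Omega$, regardless of which intervals they lie in, and the a.e.-subsequence density argument extends \eqref{eqlo1} to all of $L^2(\Omega)$, while \eqref{eqfu5.1} is read off in the common eigenbasis. In the converse, however, your derivation of $A\supseteq\Ds|_{C_c^\infty(\Omega)}$ has a genuine gap. For $f$ supported in $\Omega_\epsilon=\cup_{i=1}^n[\alpha_i+\epsilon,\beta_i-\epsilon]$ and $0<|t|<\epsilon$, the local-translation axiom \eqref{eqlo1} determines $(U(t)f)(x)$ only for a.e.\ $x\in\Omega\cap(\Omega-t)$; it says nothing about $U(t)f$ on $\Omega\setminus(\Omega-t)$, the strips of width $|t|$ adjacent to the endpoints, and your parenthetical claim that the relevant overlap is ``full-measure'' is false for $t\neq 0$. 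A priori $U(t)f$ could carry $L^2$ mass on those strips, in which case $\frac{1}{2\pi it}(U(t)f-f)$ need not converge to $\Ds f$ in $L^2(\Omega)$. The paper closes exactly this hole with Lemma \ref{lemfu6}: since $\Omega_\epsilon-t\subset\Omega\cap(\Omega-t)$, the restriction of $U(t)f$ to $\Omega_\epsilon-t$ already has norm $\|f\|$, so unitarity, $\|U(t)f\|=\|f\|$, forces $U(t)f=0$ a.e.\ on $\Omega\setminus(\Omega_\epsilon-t)$. This short norm computation is indispensable before your difference-quotient step is legitimate.

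On the eigenvector rigidity, you make the problem harder than it needs to be. Once $A$ is known to be a self-adjoint extension of $\Ds|_{C_c^\infty(\Omega)}$, Theorem \ref{thsp1} already gives every eigenvector the form $f=\left(\sum_{i=1}^n c_i\chi_{J_i}\right)e_\lambda$ with $\lambda\in\br$, so there is nothing to extract from the a.e.\ functional equation \emph{within} each interval, and the ``uniformly in $t$'' null-set bookkeeping you single out as the delicate point simply evaporates. The local-translation property is needed only once per pair $i\neq k$: choose one $t$ with $J_i\cap(J_k-t)\neq\ty$ and one good point $x\in J_i\cap(J_k-t)$ at which \eqref{eqlo1} and $U(t)f=e^{2\pi i\lambda t}f$ hold pointwise, giving $e^{2\pi i\lambda t}c_ie^{2\pi i\lambda x}=c_ke^{2\pi i\lambda(x+t)}$ and hence $c_i=c_k$; this is the paper's argument. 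Your direct route (Fubini in $(x,t)$, then a.e.\ translation invariance of $e^{-2\pi i\lambda x}v(x)$ on each $J_i$) can be made rigorous, but it reproves what Theorem \ref{thsp1} supplies for free. The remaining scaffolding --- Stone's Theorem \ref{thstone}, Theorem \ref{thstone0}(b),(c), and the converse half of Theorem \ref{thfu1} --- matches the paper.
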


\begin{proof}
Note first that $U(t)e_\lambda=e^{2\pi i\lambda t}e_\lambda$, for all $\lambda\in\Lambda$ and $t\in\br$. Then, for $t\in\br$ and $x\in\Omega\cap(\Omega-t)$, we have 
$$(U(t)e_\lambda)(x)=e^{2\pi i\lambda t}e^{2\pi i\lambda x}=e^{2\pi i\lambda(x+t)}=e_\lambda(x+t).$$

Now fix $t\in\br$ and let $f\in L^2(\Omega)$. We want to check \eqref{eqlo1}. Since $\{e_\lambda : \lambda\in\Lambda\}$ form an orthonormal basis for $L^2(\Omega)$, we can find a sequence of functions $\{f_n\}$ which are finite linear combinations of function $e_\lambda$, such that $\{f_n\}$ converges to $f$ in $L^2(\Omega)$. Passing to a subsequence, we can assume that $\{f_n\}$ converges to $f$ almost everywhere. Since $U(t)$ is unitary, $\{U(t)f_n\}$ converges to $U(t)f$ in $L^2(\Omega)$, and again passing to a subsequence we can assume in addition that $\{U(t)f_n\}$ converges to $U(t)f$ almost everywhere. 

We have $(U(t)f_n)(x)=f_n(x+t)$ for a.e. $x\in\Omega\cap(\Omega-t)$, for all $n$. Taking the limit $(U(t)f)(x)=f(x+t)$ for a.e $x\in \Omega\cap(\Omega-t)$. This proves \eqref{eqlo1} so $U$ is a unitary group of local translations. 

The relation \eqref{eqfu5.1} follows immediately, because we have the operators $A$ and $U(t)$ in diagonal form.

Assume now that $U(t)$ is a unitary group of local translations. By Stone's Theorem \ref{thstone}, there exists a self-adjoint operator $A$ such that $U(t)=\exp(2\pi itA)$. We claim that $A$ is a self-adjoint extension of $\Ds|_{C_c^\infty(\Omega)}$. We will prove that, for $f\in C_c^\infty(\Omega)$, 
\begin{equation}
\frac{1}{2\pi i t}(U(t)f-f)\mbox{ converges in $L^2(\Omega)$ to $\Ds f$ as $t\rightarrow 0$},
\label{eqfu5.2}
\end{equation}
which, by Theorem \ref{thstone0}(b) and (c), implies that $f$ is in the domain of $A$ and $Af=\Ds f$. This is to be expected, since, especially for small values of $t$, the operator $U(t)$ acts as a translation. 

We need a Lemma.

\begin{lemma}\label{lemfu6}
Assume that the function $f\in L^2(\Omega)$ is supported on the set $\Omega_\epsilon=\cup_{i=1}^n[\alpha_i+\epsilon,\beta_i-\epsilon]$ for some $\epsilon>0$, and that $|t|<\epsilon$. Then 
\begin{equation}
(U(t)f)(x)=f(x+t)\mbox{ for a.e. }x\in\Omega,
\label{eqfu6.1}
\end{equation}
where $f(x):=0$ for $x$ not in $\Omega$. 

\end{lemma}

\begin{proof}
If $|t|<\epsilon$, then $\Omega_\epsilon-t\subset (\Omega\cap(\Omega-t))$, and therefore $(U(t)f)(x)=f(x+t)$, for a.e. $x\in\Omega_\epsilon-t$. We prove that $g(x):=(U(t)f)(x)=0$ for $x\in\Omega\setminus(\Omega_\epsilon-t)$. We have
$$\|f\|_{L^2(\Omega)}^2=\|U(t)f\|_{L^2(\Omega)}^2=\int_{\Omega_\epsilon-t}|f(x+t)|^2\,dx+\int_{\Omega\setminus(\Omega_\epsilon-t)}|g(x)|^2\,dx$$
$$=\int_{\Omega_\epsilon}|f(x)|^2\,dx+\int_{\Omega\setminus(\Omega_\epsilon-t)}|g(x)|^2\,dx=\|f\|_{L^2(\Omega)}^2+\|g\|_{L^2(\Omega)}^2.$$
This implies that $g$ is 0 and we obtain the Lemma. 
\end{proof}

Take now $f\in C_c^\infty(\Omega)$. This means that $f$ is supported on a set $\Omega_\epsilon$ for some $\epsilon>0$. Using Lemma \ref{lemfu6}, for $|t|<\epsilon$ and for a.e. $x\in \Omega$, we have 
$$\frac{1}{2\pi it}((U(t)f)(x)-f(x))=\frac{1}{2\pi it}(f(x+t)-f(x)),$$
which converges uniformly to $\frac{1}{2\pi i}f'(x)$ (since $f\in C_c^\infty(\Omega)$) . Then \eqref{eqfu5.2} follows and therefore $A$ is a self-adjoint extension of $\Ds|_{C_c^\infty(\Omega)}$.

With Theorem \ref{thsp1}, the eigenvectors for $A$ are of the form $f=\left(\sum_{i=1}^n c_i\chi_{J_i}\right)e_\lambda$, $Af=\lambda f$. Then $U(t)f=e^{2\pi it\lambda}f$. 

Since $U(t)$ is a unitary group of local translations, for a.e. $x\in\Omega\cap(\Omega-t)$, we have 
$$e^{2\pi it\lambda}\left(\sum_{i=1}^n c_i\chi_{J_i}(x)\right)e_\lambda(x)=(U(t)f)(x)=f(x+t)=\left(\sum_{i=1}^n c_i\chi_{J_i}(x+t)\right)e_\lambda(x+t).$$

Fix $i\neq k$ in $\{1,\dots,n\}$ and choose $t$ such that $J_i\cap(J_k-t)\neq\ty$ and then pick $x\in J_i\cap(J_k-t)$ such that the previous relation holds. Then, we get 
$$e^{2\pi it\lambda}c_i e^{2\pi i\lambda x}=c_ke^{2\pi i\lambda(x+t)}.$$
This means that $c_i=c_k$, and thus $f=c_1 e_\lambda$. Therefore, all the eigenvectors of $A$ are multiples of $e_\lambda$, and, by Theorem \ref{thfu1}, it follows that $\Omega$ is spectral.
\end{proof}

\medskip

{\bf Concluding remarks.} As noted in the body of our paper, our present focus for the Fuglede conjecture is based on our particular choices of notions from geometry, harmonic analysis, and from spectral theory. Indeed, we have made these definite choices. Naturally, there are others, and readers will be able to review such alternative approaches in the literature; each one serving its purpose. As a guide to the relevant papers, we conclude here with the following list of citations \cite{Fug74,MR212589,MR4301821,MR4239181,MR3559001,MR3395232,MR3286496,MR4516178,MR4498477,MR4448683,MR1643694}.

The ideas in the present paper may be adapted to the case of scattering theory for quantum graphs. To see this, note that for the setting of the quantum graph case, then the intervals considered here would instead then represent edges in the quantum-graph network under consideration, see e.g., \cite{MR1204775,MR4616107}.

\medskip

{\bf Statements.} The paper does not generate data. The authors do not have any conflicts of interest.

%

\bibliographystyle{alpha}	
\bibliography{eframes}

\end{document}